\definecolor{darkblue}{rgb}{0.0,0,0.7}
\newcommand{\darkblue}{\color{darkblue}}
\newcommand{\defn}[1]{\emph{\darkblue #1}}
\setlist[enumerate]{
	label=\textnormal{({\roman*})},
	ref={\roman*}}
\def\th@plain{%
	\thm@notefont{}
	\itshape 
}
\def\th@definition{%
	\thm@notefont{}
	\normalfont 
}
\def\fdsy@scale{1}
\newcommand\fdsy@mweight@normal{Book}
\newcommand\fdsy@mweight@small{Book}
\newcommand\fdsy@bweight@normal{Medium}
\newcommand\fdsy@bweight@small{Medium}
\DeclareFontFamily{U}{FdSymbolB}{}
\DeclareFontShape{U}{FdSymbolB}{m}{n}{
	<-7.1> s * [\fdsy@scale] FdSymbolB-\fdsy@mweight@small
	<7.1-> s * [\fdsy@scale] FdSymbolB-\fdsy@mweight@normal
}{}
\DeclareFontShape{U}{FdSymbolB}{b}{n}{
	<-7.1> s * [\fdsy@scale] FdSymbolB-\fdsy@bweight@small
	<7.1-> s * [\fdsy@scale] FdSymbolB-\fdsy@bweight@normal
}{}
\DeclareSymbolFont{fdrelations}{U}{FdSymbolB}{m}{n}
\DeclareMathSymbol{\lescc}{\mathrel}{fdrelations}{66}
\newtheorem{thm}{Theorem}
\newtheorem{lemma}[thm]{Lemma}
\newtheorem{cor}[thm]{Corollary}
\theoremstyle{definition}
\newtheorem{ex}[thm]{Example}
\newtheorem{rem}[thm]{Remark}
\def\wh{\widehat}
\def\sq{\square}
\def\zz{\mathbb Z}
\def\Ga{\Gamma}
\def\Om{\Omega}
\def\ga{\gamma}
\def\al{\alpha}
\def\be{\beta}
\def\om{\omega}
\def\ssu{\subset}
\def\<{\langle}
\def\>{\rangle}
\def\rB{ {\text {\rm B} } }
\def\rD{ {\text {\rm D} } }
\def\LE{ {\text {\rm LE} } }
\def\0{{\mathbf 0}}
\def\.{\hskip.06cm}
\def\ts{\hskip.03cm}
\def\lra{\leftrightarrow}
\def\La{\Lambda}
\def\ze{{\zeta}}
\def\nin{\noindent}
\def\Pb{{\text{\bf P}}}
\def\aF{\textrm{F}}
\def\aFr{\textrm{\em F}}
\def\aN{\textrm{N}}
\def\aNr{\textrm{\em N}}
\title{Log-concavity in planar random walks}
\date{\today}
\author{Swee Hong Chan}
\address[Swee Hong Chan]{Department of Mathematics, UCLA,  Los Angeles, CA 90095.}
\email{\texttt{sweehong@math.ucla.edu}}
\author[\ts Igor Pak]{Igor Pak}
\address[Igor Pak]{Department of Mathematics, UCLA,  Los Angeles, CA 90095.}
\email{\texttt{pak@math.ucla.edu}}
\author[\ts Greta Panova]{Greta Panova}
\address[Greta Panova]{Department of Mathematics, USC,  Los Angeles, CA 90089.}
\email{\texttt{gpanova@usc.edu}}
\begin{document}

\begin{abstract}
We prove log-concavity of exit probabilities of lattice random walks
in certain planar regions.
\end{abstract}

\maketitle

\section{Introduction}\label{sec:intro}
In the study of \emph{random walks}, there is a fundamental idea
based on its \emph{Markovian property}:  when some ``life event''
happens to the walk, the future trajectory of the walk can be changed,
and this transformation can be exploited to obtain both quantitative
and qualitative conclusions on the random walk distributions.  These
``life events'' can be rather mundane, for example the first time
when the walk returns to the starting point, crosses with some other random walk,
enters an obstacle, etc.  On the other hand, the conclusions can be
quite remarkable and include the classical \emph{reflection principle},
and the \emph{Karlin--McGregor formula} also known as the
\emph{Lindstr\"om--Gessel--Viennot lemma} in the discrete setting,
see~$\S$\ref{ss:finrem-hist}.

In this paper we use a variation on this approach for random walks
in simply connected planar regions.  The conclusion is qualitative
in some sense that at the end we prove \emph{log-concavity} of a
certain natural  exit probability distribution.  We chose to present
a discrete version of the result rather than the (somewhat cleaner)
continuous version as the former is more powerful and amenable to
generalizations (see Section~\ref{sec:gen}), while the latter follows
easily by taking limits.

\medskip

\begin{thm}[{\rm Special case of Theorem~\ref{t:trans}}]\label{t:main}
Let \ts $\Ga\ssu \zz^2$ \ts be a simply connected region in the plane
whose boundary \. $\partial \Ga = \al\cup \eta_+\cup \eta_- \cup \be$ \.
is comprised of two vertical intervals \ts $\al, \ts \be$,
and two \ts $x$-monotone lattice paths \ts $\eta_+, \ts \eta_-$\ts,
see Figure~\ref{f:region}.
We assume that \ts $\al \ssu \{x=0\}$ \ts and \ts $\be \ssu \{x=m\}$ \ts
for some $m>0$.

Let \ts $\{X_t\}$ \ts be the nearest neighbor lattice random walk  which
starts at the origin \ts $X_0=O \in \al$, and is absorbed
whenever \ts $X_t$ \ts tries to exit the region~$\ts\Ga$.  Denote by \ts $T$ \ts the
first time~$t$ such that \ts $X_t \in \be$, and let \ts $p(k)$ \ts be
the probability that $X_T=(m,k)$.  Then \ts $\{p(k)\}$ \ts is log-concave:
$$p(k)^2 \, \ge \, p(k+1) \. p(k-1) \qquad \text{for all \ $k\in \zz$, \ such that \ \ $(m,k\pm 1)\ts \in \be$}.
$$
\end{thm}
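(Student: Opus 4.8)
The plan is to sweep across $\Ga$ in the $x$-direction and reduce the statement to a total-positivity property of column-to-column transfer operators, in the spirit of the Karlin--McGregor / Lindstr\"om--Gessel--Viennot philosophy invoked above. First I would record a normalization: since $\be$ is \emph{all} of $\Ga\cap\{x=m\}$, the walk can reach $\be$ only by a rightward step out of column $x=m-1$, so $p(k)=\tfrac14\,G(O,(m-1,k))$ where $G$ is the Green's function of the walk killed on leaving $\Ga$ and on entering $\be$. More usefully, let $S_j=\Ga\cap\{x=j\}$, which by $x$-monotonicity of $\eta_+,\eta_-$ is a \emph{contiguous} vertical interval of heights. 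Applying the strong Markov property at the successive first-passage times to the lines $x=0,1,\dots,m$ factors the exit law as $p(\cdot)=\delta_{y_0}\,K_0K_1\cdots K_{m-1}$, where $K_j(y,y')$ is the (sub-stochastic, due to absorption on $\eta_\pm$) probability that a walk from $(j,y)$ first reaches column $j+1$ at height $y'$.

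The clean part of the argument is that each $K_j$ is totally positive of order two, i.e.\ $K_j(y,y')K_j(\bar y,\bar y')\ge K_j(y,\bar y')K_j(\bar y,y')$ for $y<\bar y$ and $y'<\bar y'$. First I would set this up as a non-crossing statement: take one first-passage trajectory $\tau_1$ from $(j,y)$ to $(j+1,\bar y')$ and a second $\tau_2$ from $(j,\bar y)$ to $(j+1,y')$, so their start heights are ordered oppositely to their end heights. Reading the four endpoints cyclically up the line $x=j$ and down the line $x=j+1$ puts the two trajectories on interleaving chords of a disk containing $\Ga\cap\{x\le j+1\}$, so by planarity they must share a lattice vertex $P$ (with $x(P)\le j$, since each touches $x=j+1$ only at its terminus). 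Exchanging tails at the first such $P$ produces admissible first-passage trajectories $(j,y)\to(j+1,y')$ and $(j,\bar y)\to(j+1,\bar y')$ of the same total length, hence the same $4^{-\ell}$ weight; the swap keeps both trajectories inside $\Ga$ (no reflection is used), and the interval structure of the slices $S_j$ is exactly what guarantees admissibility of the spliced paths. This is an involution on crossing pairs, and since \emph{every} interleaving pair crosses while only some sorted pairs do, it yields $\mathrm{TP}_2$; by Cauchy--Binet the product $K_0\cdots K_{m-1}$ is $\mathrm{TP}_2$ as well.

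The hardest step will be the passage from this two-row total positivity to the \emph{one-row} conclusion we actually need: log-concavity of $k\mapsto p(k)=(\delta_{y_0}K_0\cdots K_{m-1})(y_0,k)$ for the single fixed source $O$. Total positivity alone is genuinely insufficient here — a single row of a $\mathrm{TP}_2$ matrix need not be log-concave — so the proof must use essentially both that $O$ is one point and that the top/bottom boundaries $\eta_\pm$ make $\Ga$ finite in the vertical direction. Indeed this finiteness is not cosmetic: for the infinite strip the vertical displacement of the first passage between adjacent columns has Cauchy-type, log-\emph{convex} tails, and the resulting strip exit density fails to be log-concave, so any purely local or translation-invariant estimate is doomed and a global argument is forced. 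The central tension I would have to resolve is that the natural move — reflecting a trajectory across the line $y=k$ to turn an exit at $k\pm 1$ into an exit at $k$ — leaves the non-symmetric region $\Ga$, whereas the only region-preserving rewiring, a tail-swap at a shared vertex, merely permutes the exit heights and cannot shift the pair $\{k-1,k+1\}$ to $\{k,k\}$. Reconciling ``stays inside $\Ga$'' with ``contracts the two exit heights,'' presumably via a reflection-principle variation coupling two copies of the walk and exploiting the verticality of both $\al$ and $\be$ together with reversibility, is where I expect the real work (and the paper's new idea) to lie, and is where I would concentrate the effort.
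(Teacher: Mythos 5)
There is a genuine gap, and you name it yourself: the third paragraph concedes that after the transfer--operator setup you do not know how to pass from $\mathrm{TP}_2$ of the column kernels to log-concavity of the single row $k\mapsto p(k)$, and you correctly diagnose the obstruction --- a tail-swap at a shared vertex of the two trajectories only permutes the exit heights $\{k-1,k+1\}$ and can never produce $\{k,k\}$, while a reflection leaves $\Ga$. The paper's entire content is the step you leave open, and it is resolved by a third option you do not consider: do not swap the path $O\to(m,k+1)$ with the path $O\to(m,k-1)$ directly, but first \emph{translate} the second path vertically up by one unit, so that it runs from $(0,1)$ to $(m,k)$; show that this translate must intersect the first path inside the appropriate connected component of the region between $\eta_+$ and a shifted copy of $\eta_-$ (this is where the $x$-monotonicity of $\eta_\pm$ and the verticality of $\al,\be$ enter); swap tails at a canonically chosen intersection point; and translate the exchanged tail back down. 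This yields two paths $O\to(m,k)$ while preserving, column by column, the multiset of horizontal and vertical steps (hence the probability weight), and making the chosen intersection point reversible in the presence of loops requires Fomin's loop-erased-walk lemma (Lemma~\ref{l:Fomin}). That translation-before-swap device, together with the bookkeeping of Lemma~\ref{l:main}, is the missing idea, so the proposal is a framework plus an accurate description of the difficulty, not a proof.

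Two further points. The $\mathrm{TP}_2$ argument you do give is also flawed: for $j\ge 1$ the sources $(j,y)$ and $(j,\bar y)$ are \emph{interior} points of $\Ga\cap\{x\le j+1\}$, not boundary points, so the four endpoints are not interleaving chords of a disk and planarity does not force the two first-passage trajectories to share a lattice vertex. Concretely, with $y=0$, $\bar y=2$, $y'=1$, $\bar y'=3$, the trajectories
$(j,0)\to(j-1,0)\to(j-1,1)\to(j-1,2)\to(j-1,3)\to(j,3)\to(j+1,3)$ and $(j,2)\to(j,1)\to(j+1,1)$ are vertex-disjoint, so your involution is undefined on some of the pairs it must handle; establishing total positivity of such hitting kernels genuinely requires the loop-erased-walk machinery of~\cite{Fom}, not a naive crossing count. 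Finally, your claim that the infinite-strip exit density fails to be log-concave contradicts Corollary~\ref{c:inf} and $\S$\ref{ss:finrem-cauchy}: with absorption on the left of $\al$ the limiting exit law is the hyperbolic secant distribution, which \emph{is} log-concave; the Cauchy (log-convex-tailed) behaviour arises only for the walk not killed at $x<0$. So it is the absorbing vertical boundary, not the finiteness of $\Ga$ in the $y$-direction, that is essential --- the top and bottom boundaries $\eta_\pm$ can be removed entirely.
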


\begin{figure}[hbt]
	\includegraphics[width=6.cm]{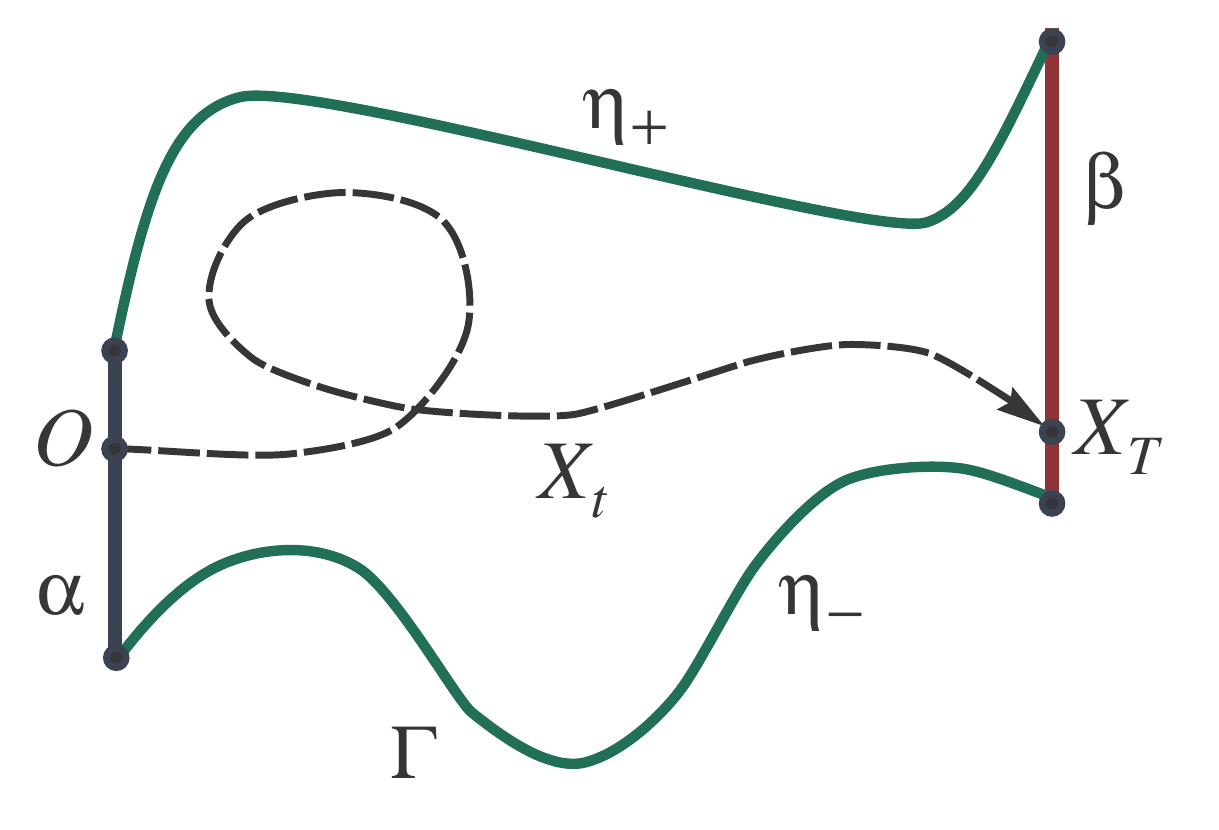}
\vskip-0.3cm
	\caption{Random walk $X_t$ in a region $\Ga$ as in the theorem.}
	\label{f:region}
\end{figure}


In particular, the theorem implies that the sequence \ts $\{p(k)\}$ \ts is
\emph{unimodal} (see e.g.~\cite{Bra}).  This also points to the difficulty
of proving the result by a direct calculation, as in general there is no
natural point at which the probability maximizes.  We refer to \ts $p(k)$
\ts as \defn{exit probabilities}, since one can think of them as probabilities
the walks exits the region through different points on the interval~$\beta$.

Perhaps surprisingly, Theorem~\ref{t:main} is a byproduct of our recent
work~\cite{CPP2} on the \emph{Stanley inequality} for the case of
posets of width two;  in fact, we obtain the inequality as a corollary
of the theorem (see~$\S$\ref{ss:finrem-posets}).

\bigskip

\section{Proof of Theorem~\ref{t:main}}

We start with the following combinatorial result.  Throughout this section,
a \emph{lattice path} is a path in~$\zz^2$, possibly self-intersecting
along vertices and edges.

\medskip

\begin{lemma}\label{l:main}
In the conditions of Theorem~\ref{t:main}, let \ts $A,B\in \al$ \ts and \ts $C,C',D,D'\in \be$ \ts be
six points on the boundary of the region~$\Ga$, such that
$$
A=(0,a), \ B=(0,b), \ C= (m,c), \ C'=(m,c-r), \ D'=(m,d+r), \ D=(m,d),
$$
and suppose \ts $(a-b) \leq  (c-d)-r$, where \ts $r>0$.  Then there is an injection \.
$$\Phi: \,\bigl\{(\xi_{AC},\xi_{BD})\bigr\} \, \longrightarrow \, \bigl\{(\xi_{AC'},\xi_{BD'})\bigr\},$$
where

\smallskip

$\circ$ \ $\xi_{AC}: A\to C$, \. $\xi_{BD}: B\to D$, \. $\xi_{AC'}: A\to C'$, and \. $\xi_{BD'}: B\to D'$ \. are lattice paths which lie inside~$\Ga$,

\smallskip

$\circ$ \ the sum of numbers of horizontal edges in \ts $\xi_{AC}$ \ts and \ts $\xi_{BD}$ \ts which project onto \ts $[i,i+1]$, \ts is equal to

\hskip.45cm
the sum of numbers of horizontal edges in \ts $\xi_{AC'}$ \ts and \ts $\xi_{BD'}$ \ts which project onto \ts $[i,i+1]$, for all \. $0\le i \le m-1$,

\smallskip

$\circ$ \ the sum of numbers of vertical edges in \. $\xi_{AC}$ \. and \. $\xi_{BD}$ \. which project onto \ts $j$, \ts is equal to

\hskip.45cm
the sum of numbers of vertical edges in \. $\xi_{AC'}$ \. and \. $\xi_{BD'}$ \. which project onto \ts $j$, for all \. $0\le j \le m$.
\end{lemma}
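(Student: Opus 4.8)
The plan is to build \ts $\Phi$ \ts by a reflection-principle surgery on the tails of the two paths. I would first fix orientation: by the symmetry \ts $y \mapsto -y$ \ts we may assume \ts $c \ge d$, so that the hypothesis reads \ts $c-d \ge (a-b)+r$ \ts and the substitution \ts $C \mapsto C'$, $D\mapsto D'$ \ts pulls the two right endpoints toward one another by \ts $r$ \ts each. The crucial preliminary observation is that \emph{both} statistics to be preserved are projections onto the \ts $x$-axis: a horizontal edge over \ts $[i,i+1]$ \ts is recorded by its \ts $x$-interval, and a vertical edge ``over \ts $j$'' is recorded by its column \ts $x=j$. Consequently, any reflection of part of a path across a horizontal line \ts $y=h$ \ts — which fixes every \ts $x$-coordinate — preserves both statistics, and so does any cut-and-reconnect of the two paths at a common lattice point. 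This reduces the lemma to rearranging the \emph{heights} of the two paths while leaving their column-by-column horizontal and vertical profiles untouched.

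Now I would construct the map. A horizontal reflection or a plain tail-swap can only transpose or reflect the endpoint-height pair \ts $(c,d)$, operations that act symmetrically and fix the sum \ts $c+d$; but the target pair \ts $(c-r,\,d+r)$ \ts is obtained from \ts $(c,d)$ \ts by an \emph{antisymmetric} shift along the line \ts $y_1+y_2=c+d$, so the two tails must be reflected in \emph{different} mirrors. Concretely, \ts $\Phi$ \ts reflects the tail of \ts $\xi_{AC}$ \ts beyond its last crossing of the line \ts $y=c-\tfrac r2$ \ts across that line, sending its endpoint \ts $C=(m,c)$ \ts to \ts $C'=(m,c-r)$; and independently reflects the tail of \ts $\xi_{BD}$ \ts beyond its last crossing of \ts $y=d+\tfrac r2$ \ts across that line, sending \ts $D$ \ts to \ts $D'$. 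Each reflection, \ts $y\mapsto (2c-r)-y$ \ts and \ts $y\mapsto (2d+r)-y$, is a lattice automorphism fixing all \ts $x$-coordinates, so preservation of the two statistics is immediate and the new endpoints are exactly \ts $C',D'$.

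Two things must be verified for \ts $\Phi$ \ts to land in the target set: the reflections are defined only when \ts $\xi_{AC}$ \ts actually dips to height \ts $\le c-r$ \ts and \ts $\xi_{BD}$ \ts rises to height \ts $\ge d+r$ \ts somewhere, and the reflected tails must stay inside \ts $\Ga$. I would control both using the spread hypothesis \ts $c-d\ge (a-b)+r$ \ts together with the \ts $x$-monotonicity of \ts $\eta_+,\eta_-$: the spread condition forces the two paths to overlap in the height band between the two mirrors, which both supplies the required crossings and lets me, when one path fails to cross its own mirror, first tail-swap the two paths at a common vertex in that band so that the descending (respectively ascending) modification is carried out on the path that does reach low (respectively high) enough. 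Containment should then follow because the reflected tail of \ts $\xi_{AC}$ \ts occupies heights in \ts $[\ts c-r,\ c-r/2\ts]$ \ts over columns where the monotone floor \ts $\eta_-$ \ts already lies at or below \ts $c-r$ \ts (it does at \ts $x=m$, since \ts $C'\in\be\ssu\Ga$), and symmetrically for the ceiling \ts $\eta_+$ \ts and \ts $\xi_{BD}$.

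Injectivity is the easy direction: each reflection is governed by the classical reflection principle, an involution matching the last crossing of a fixed horizontal line before and after, so \ts $\Phi$ \ts is inverted by reflecting the tails back across the same two lines \ts $y=c-\tfrac r2$ \ts and \ts $y=d+\tfrac r2$, the choice of which tail to reflect being recovered from the image since \ts $C',D'$ \ts and the mirrors are fixed data. The main obstacle is precisely the containment-and-existence step: guaranteeing that the reflected tails never leave \ts $\Ga$ \ts and that the necessary crossings are present in \emph{every} configuration. This is where the shape of \ts $\Ga$ \ts (the \ts $x$-monotone boundaries \ts $\eta_\pm$ \ts meeting the vertical sides \ts $\al,\be$) and the hypothesis \ts $c-d\ge (a-b)+r$ \ts must enter essentially, and I expect the configurations in which a single path does not cross its own mirror — forcing the coupling of the two paths — to be the most delicate part of the argument.
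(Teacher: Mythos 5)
There is a genuine gap, and it sits exactly where you flag the ``main obstacle'': containment and existence of the crossings. First, containment fails for the reflection itself. After its last visit to the mirror $y=c-\tfrac{r}{2}$, the tail of $\xi_{AC}$ lies entirely on the $C$-side of the mirror but is otherwise unconstrained: it may climb all the way up to the ceiling $\eta_+$. Reflecting it therefore produces a tail that descends as far \emph{below} the mirror as the original rose above it, and nothing prevents that from crossing $\eta_-$. Your assertion that the reflected tail occupies heights in $[\ts c-r,\ c-r/2\ts]$ is false --- that would require the original tail to stay in $[\ts c-r/2,\ c\ts]$, which it need not. Since $\eta_\pm$ are arbitrary $x$-monotone paths, they are not symmetric under any horizontal reflection, so no variant of ``reflect and stay inside $\Ga$'' can work; reflection inverts the vertical order, which is precisely what the boundary constraint cannot tolerate. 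Second, the required crossings need not exist, and the fallback does not rescue them: take $a=c$, so the hypothesis forces $b\ge d+r$, and let $\xi_{AC}$ be the horizontal path at height $c$. It never meets $y=c-\tfrac{r}{2}$, and $\xi_{BD}$ may stay strictly below it throughout, so the two paths share no common vertex and the proposed ``tail-swap in the band'' is unavailable (indeed $\xi_{AC}$ never even enters the band). The hypothesis $(a-b)\le(c-d)-r$ forces an intersection only after one of the paths has been translated vertically; it forces nothing about the original configuration.

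The paper's proof avoids both problems by using vertical \emph{translation} rather than reflection: $\xi_{BD}$ is shifted up by $c-d-r$, which by the hypothesis makes it start weakly above $A$ and end at $C'$ below $C$, so it must cross $\xi_{AC}$ for topological reasons; the tails are swapped at a canonically chosen intersection point (Fomin's loop-erased-walk lemma is needed here, since ``last intersection'' is ill-defined for self-intersecting paths), and the swapped piece is translated back down to form $\xi_{BD'}$. Containment is then controlled because translating down preserves ``below $\eta_+$'' outright and preserves ``above $\eta_-$'' for the portion of $\xi_{AC}$ beyond its last meeting with the up-translate of $\eta_-$ --- this is the role of the auxiliary points $E$ and $F$, which have no analogue in your construction. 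Your observation that both statistics are projections onto the $x$-axis and hence survive any $x$-preserving surgery is correct and is exactly the reason the paper's swap works; but as written your map is neither everywhere defined nor $\Ga$-preserving, and the injectivity argument collapses with it.
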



\smallskip

Here by \emph{project} we mean the vertical projection onto the $x$-axis.
By adding the edge and vertex count equalities above over all~$i$ and~$j$,
we conclude that injection $\ts \Phi$ \ts
preserves the total length of these paths:
$$\bigl|\xi_{AC}\bigr| \. + \. \bigl|\xi_{BD}\bigr| \, = \,
\bigl|\xi_{AC'}\bigr| \. + \. \bigl|\xi_{BD'}\bigr|\ts.
$$


\begin{proof}[{Proof of Lemma~\ref{l:main}}]
The proof is an explicit construction of the injection~$\Phi$,
illustrated in Figure~\ref{f:simpllistic}.  See also Figure~\ref{f:steps}
in the next section for a detailed construction of each step.

\smallskip

\nin
$(0)$  \. We start with the lattice paths \ts $\xi_{AC}$ \ts and
\ts $\xi_{BD}$ \ts drawn inside the region~$\Ga$.  Note that by the definition
of the points in the lemma, we have \ts $|CC'| = |D'D|=r$.

\smallskip

\nin
$(1)$  \. Let \ts $\ell:=b+c-d-r$ \ts and let \ts $B':=(0,\ell)$. By the assumption in the lemma,
$B'$ lies above $A$ on the line spanned by~$\al$. Denote by \ts $\wh\eta_-$ \ts the lattice
path \ts $B\to D$ \ts formed by following interval~$\alpha$ down from~$B$,
then path \ts $\eta_-$ \ts and then interval~$\be$ up to~$D$.
Let \ts $\chi$ \ts be the lattice path
obtained by shifting  \ts $\eta_-$ \ts up at distance~$(\ell-b)$.
Similarly, let \ts
$\wh \chi: B'\to C'$ \ts be the lattice path
obtained by shifting  \ts $\wh\eta_-$ \ts up at distance~$(\ell-b)$.

Note that the path \ts $\xi_{AC}$ \ts starts below \ts $\wh\chi$ \ts and ends
above \ts $\wh\chi$.  Thus \ts $\xi_{AC}$ \ts intersects \ts $\wh\chi$ \ts at least once, where
the intersection points could be multiple and include~$A$.
Order these points of intersection according to the order in which they appear
on~$\ts\xi_{AC}$, and denote by $E$ the last such point of intersection.  Finally,
denote by \ts $\xi_{EC}$ \ts the last part of the path \ts $\xi_{AC}$ \ts
between~$E$ and~$C$, and note that \ts $\xi_{EC}$ \ts lies \emph{above} the
$x$-monotone lattice path~$\wh\chi$.

\smallskip

\nin
$(2)$  \. Denote by \ts $\wh \eta_+$ \ts the lattice path \ts $A\to C$, starting at $A$, following $\al$ up, then \ts$\eta_+$ and ending by following $\be$ down to $C$. 
Let \ts $\zeta_{B'C'}: B'\to C'$ \ts  be the lattice path
obtained by shifting  \ts $\xi_{BD}$ \ts up at distance~$\ell-b$.  By the same argument
as above, path \ts $\zeta_{B'C'}$ \ts start above and ends below \ts $\wh \eta_+$.
Denote by $F$ the last point of intersection of  \ts $\wh \eta_+$ \ts and
\ts $\zeta_{B'C'}$ \ts according to the order on which they appear on~$\zeta_{B'C'}$\ts.
Finally, denote by \ts $\ze_{FC'}$ \ts the last part of the path \ts $\zeta_{B'C'}$ \ts
between~$F$ and~$C$, and note that \ts $\zeta_{FC'}$ \ts lies \emph{below} the
$x$-monotone lattice path~$\eta_+$\ts.

\smallskip

\nin
(3) \. Observe that \ts $\zeta_{B'C'}$ \ts lies above \ts $\wh\chi$ \ts since shifting
both paths down gives \ts $\xi_{BD}$ \ts lying above \ts $\eta_-$, respectively.
Also, the path \ts $\xi_{EC}$ \ts lies below \ts $\eta_+$ \ts and above \ts $\wh\chi$ \ts by definition.
Since $C'$ is below $C$, we have that $E$ and $C$ lie on different sides of $\zeta_{FC'}$.
Thus lattice paths \ts $\xi_{EC}$ \ts and \ts $\zeta_{FC'}$ \ts must intersect
in the connected component $\La$ of the region between \ts $\eta_+$ \ts and~$\wh\chi$
that contains interval $[CC']\ssu \be$.  There could be many such intersections,
of course, including multiple intersections when the paths form loops.

\smallskip

\begin{lemma}[{\rm Fomin~\cite[Thm~6.1]{Fom}}]\label{l:Fomin}
Let \ts $\ga_1: E\to C$ \ts and \ts $\ga_2: F\to C'$ \ts be two intersecting paths between
boundary points of the simply connected region \ts $\La \subseteq \Ga$.
Let \ts $G \in \ga_1\cap \ga_2$ \ts be an intersection point, and suppose
$$
\ga_1 \. := \. E \. \to_{\ga_1'} \. G \. \to_{\ga_1''} \. C \qquad \text{and} \qquad
\ga_2 \. := \. F \. \to_{\ga_2'} \. G \. \to_{\ga_2''} \. C\ts,
$$
by which we mean that \ts $G$ \ts separates \ts $\ga_i$ \ts into two paths: \ts $\ga_i'$
\ts and \ts $\ga_i''$, where \ts $i\in \{1,2\}$.
Then there is a well defined \defn{key intersection} point \ts $G$ \ts as above,
such that the map \ts $\bigl\{(\ga_1,\ga_2)\bigr\}\to \bigl\{(\ga_1^\ast,\ga_2^\ast)\bigr\}$ \ts
is an injection, where \. the pair of paths \ts
$(\ga_1^\ast,\ga_2^\ast)$ \ts is obtained from \ts $(\ga_1,\ga_2)$ \ts by
a swap at~$G$~:
$$\ga_1^\ast \. := \. E \. \to_{\ga_1'} \. G \. \to_{\ga_2''} \. C' \qquad \text{and} \qquad
\ga_2^\ast \. := \. F \. \to_{\ga_2'} \. G \. \to_{\ga_1''} \. C\ts.
$$
\end{lemma}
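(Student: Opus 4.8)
The plan is to prove injectivity by exhibiting an explicit inverse, and it is this requirement that forces the definition of the key intersection point. The swap only re-glues the four arcs $\gamma_1'\colon E\to G$, $\gamma_1''\colon G\to C$, $\gamma_2'\colon F\to G$, $\gamma_2''\colon G\to C'$ at their common point $G$, producing $\gamma_1^\ast=\gamma_1'\cup\gamma_2''$ and $\gamma_2^\ast=\gamma_2'\cup\gamma_1''$. Hence the map will be an injection as soon as the rule that selects $G$ from the ordered pair $(\gamma_1,\gamma_2)$ returns the \emph{same} point $G$ when applied to the image $(\gamma_1^\ast,\gamma_2^\ast)$: in that case the swap is its own inverse, since re-gluing the arcs of $(\gamma_1^\ast,\gamma_2^\ast)$ at $G$ recovers $\gamma_1'\cup\gamma_1''=\gamma_1$ and $\gamma_2'\cup\gamma_2''=\gamma_2$. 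So the entire burden is to produce a selection rule for $G$ that is stable under the swap.

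First I would take $G$ to be the \emph{last} point of $\gamma_1\cap\gamma_2$ in the order in which $\gamma_1$ is traversed from $E$ to $C$. The defining feature of this choice is that the tail $\gamma_1''$ meets $\gamma_2$ only at its initial point $G$. I would then check that this rule is preserved by the swap. The portion of $\gamma_1^\ast$ lying after $G$ is exactly $\gamma_2''$, and $\gamma_2^\ast=\gamma_2'\cup\gamma_1''$; since $\gamma_1''$ meets $\gamma_2$ (and in particular $\gamma_2''$) only at $G$, the arc $\gamma_2''$ can meet $\gamma_2^\ast$ after $G$ only along $\gamma_2'$. Provided this last possibility is excluded, $G$ is again the last crossing of $\gamma_1^\ast$ with $\gamma_2^\ast$ along $\gamma_1^\ast$, the rule is preserved, and injectivity follows as in the previous paragraph. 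Existence of a crossing, hence of $G$, is guaranteed by hypothesis, and is consistent with the planar picture used to reach this lemma: because $C'$ lies below $C$, the endpoints $E$ and $C$ of $\gamma_1$ are separated by $\gamma_2$, so $\gamma_1$ must pass from one side of $\gamma_2$ to the other.

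The main obstacle is exactly the non-simple case flagged just before the statement, in which the paths form loops. Then a single point of $\gamma_1\cap\gamma_2$ may be visited several times, so $G$ must be specified as a \emph{position} (an occurrence along each path) rather than a bare point, and the arcs $\gamma_2'$ and $\gamma_2''$ obtained by cutting a self-intersecting $\gamma_2$ at $G$ may meet again after $G$; such a meeting is precisely what could push the ``last crossing'' of the image strictly past $G$ and corrupt the inverse. Resolving this is where simple-connectivity of $\Lambda$ does the real work: one fixes the occurrence of $G$ and uses a Jordan-curve argument inside $\Lambda$ to show that, once $\gamma_1$ has crossed $\gamma_2$ for the last time, the exchanged tails stay on prescribed sides and cannot recreate a crossing beyond $G$. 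Carrying out this bookkeeping carefully—rather than verifying any single inequality—is the delicate heart of the argument, and it is exactly what is packaged into the notion of a well-defined \emph{key intersection} in Fomin's theorem.
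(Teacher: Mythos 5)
There is a genuine gap, and it is exactly the pitfall this lemma exists to avoid. Your selection rule --- take $G$ to be the last point of $\gamma_1\cap\gamma_2$ in the order of traversal of $\gamma_1$ --- is not stable under the swap, and the rescue you sketch in your final paragraph does not work. The failure mode you yourself identify is real and cannot be excluded: if $\gamma_2$ self-intersects so that its tail $\gamma_2''$ returns to meet $\gamma_2'$ at some point $H\neq G$, then $H$ lies on $\gamma_1^\ast=\gamma_1'\cup\gamma_2''$ at a position strictly after $G$ and also on $\gamma_2^\ast=\gamma_2'\cup\gamma_1''$, so the ``last intersection'' of the image pair along $\gamma_1^\ast$ is $H$ rather than $G$, and the putative inverse cuts in the wrong place. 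No Jordan-curve or simple-connectivity argument can forbid this: whether $\gamma_2''$ meets $\gamma_2'$ again is a property of the single self-intersecting path $\gamma_2$, entirely unconstrained by the topology of $\Lambda$, and the lemma explicitly allows paths with loops. The paper records this precise obstruction as issue {\small (\textit{v})} in Section~3 (``there is no natural way to define the `last intersection' which would be easy to reverse'') and notes in a footnote that an earlier draft fell into the same trap; so the one step you defer is the entire content of the lemma.

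The actual proof (following Fomin) breaks the symmetry with the loop-erased walk: form $\LE(\gamma_1)$ by erasing cycles in the order they appear, let $X$ be the intersection point of $\LE(\gamma_1)$ with $\gamma_2$ closest to the starting point along $\LE(\gamma_1)$, and then fix the occurrence of $X$ on $\gamma_1$ as the one whose incoming edge survives loop-erasure, and on $\gamma_2$ as the first occurrence. This asymmetric choice is what makes the swap invertible on its image (namely on those pairs $(\gamma_1^\ast,\gamma_2^\ast)$ for which $\LE(\gamma_1^\ast)$ meets $\gamma_2^\ast$), which suffices for injectivity. Your write-up correctly locates where the difficulty sits, and your reduction of injectivity to swap-stability of the selection rule is the right frame, but the selection rule you propose provably fails and the idea that repairs it --- loop erasure --- is absent.
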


\smallskip

The lemma is a special case of the (much more general) result by Fomin;
below we include  a proof sketch for completeness.
Denote by $G$ the \ts {key intersection} \ts of paths
\ts $\xi_{EC}$ \ts and \ts $\zeta_{FC'}$ \ts defined by the lemma.
Finally,
denote by \ts $\xi_{GC}$ \ts the last part of the path \ts $\xi_{EC}$ \ts
between~$G$ and~$C$, and
 by \ts $\ze_{GC'}$ \ts the last part of the path \ts $\zeta_{FC'}$ \ts
between~$G$ and~$C$.

\begin{figure}[hbt]
\includegraphics[width=12.3cm]{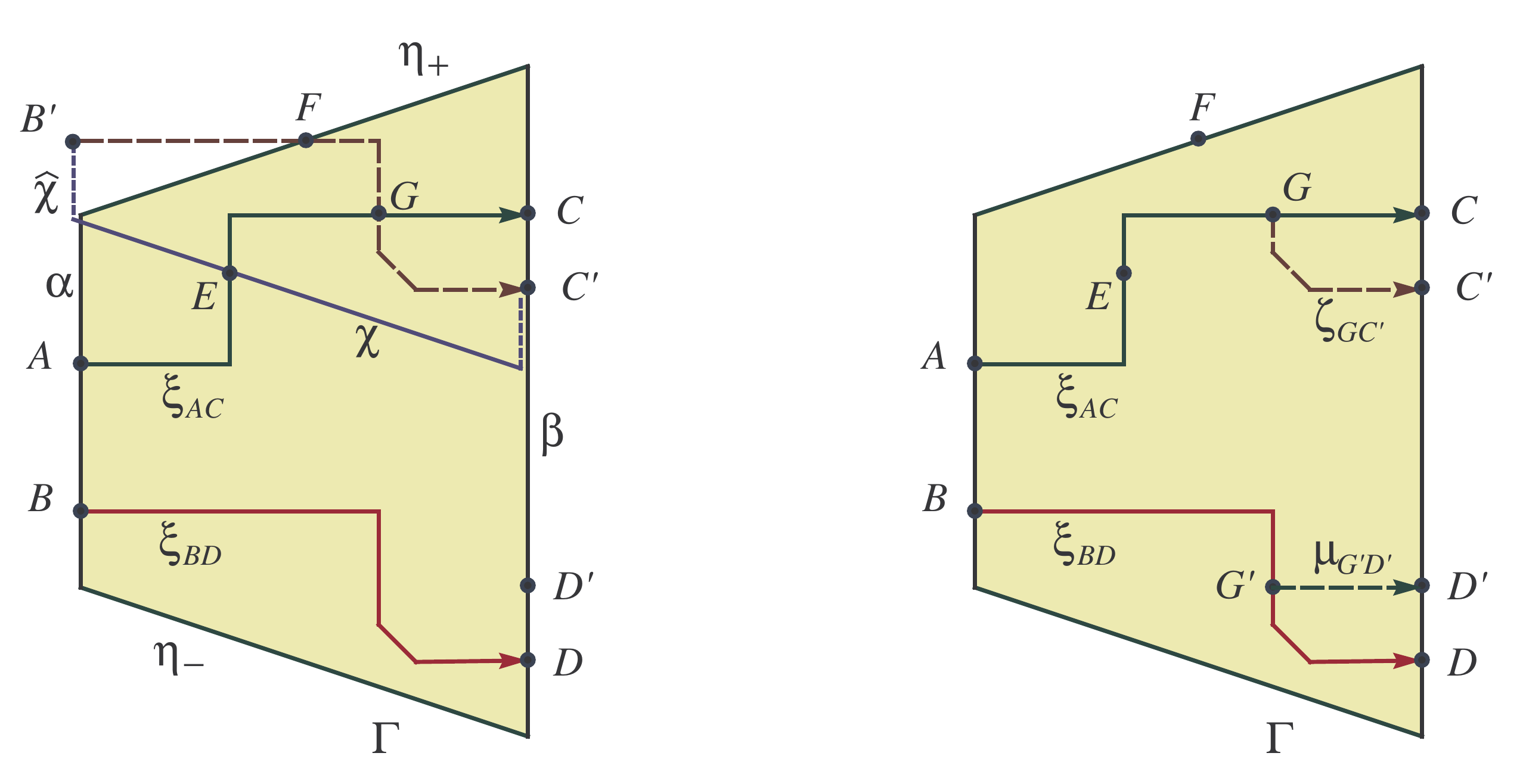}
	\caption{Construction of paths \ts $\zeta_{GC'}$ \ts
and \ts $\mu_{G'D'}$ \ts in the proof of the lemma.}
	\label{f:simpllistic}
\end{figure}


\smallskip

\nin
(4) \. Denote by \ts $\xi_{AG}: A \to G$ \ts the lattice path \ts $\xi_{AC}$ \ts  without the
last part \ts $\xi_{GC}$.  In the notation of the lemma, define \ts $\xi_{AC'}$ \ts to be
the path \ts $\xi_{AG}$ \ts followed by \ts $\ze_{GC'}$.

\smallskip

\nin
(5) \. Let $G'$ be the point on~$\xi_{BD}$ \ts obtained by shifting $G$ down at distance~$\ell$, and denote
by \ts $\xi_{G'D}$ \ts the last part of the path \ts $\xi_{BD}$ \ts
between~$G$ and~$D$.  Similarly, let \ts $\mu_{G'D'}$ \ts be the path obtained shifting
down at distance~$\ell$ the path \ts $\ze_{GC'}\ts$.
Denote by \ts $\xi_{BG'}: B \to G'$ \ts the lattice path \ts $\xi_{BD}$ \ts  without the
last part \ts $\xi_{G'D}$.
In the notation of the lemma, define \ts $\xi_{BD'}$ \ts to be
the path \ts $\xi_{BG'}$ \ts followed by \ts $\mu_{G'D'}$.
%
%

\medskip

\nin
{\bf Claim:} \ts {\emph The map \. $\Phi: \ts \big(\xi_{AC},\xi_{BD}\big) \ts\to \ts \big(\xi_{AC'},\xi_{BD'}\big)$ \.
constructed above is an injection.}

\medskip

To prove the claim, we consider the inverse of $\Phi^{-1}$. Start with a pair of lattice paths \ts $(\xi_{AC'},\xi_{BD'})$ \ts
and follow the steps as above after relabeling \ts $C\lra C'$, $D\lra D'$.
This construction will not work at all cases as the shifted paths are no longer
guaranteed to intersect because of topological considerations.  However,
when \. $(\xi_{AC'},\xi_{BD'}) = \Phi(\xi_{AC},\xi_{BD})$, the construction
will work for the same reason and produce the pair of lattice paths \ts
$(\xi_{AC},\xi_{BD})$ \ts as in the steps~$(0)$--$(5)$.  Here we are using the
key intersection point in step~$(3)$ to ensure the construction
is well defined and can be inverted at this step.
 We see that $\Phi$ is an involution between pairs of lattice paths $\bigl\{(\xi_{AC},\xi_{BD})\bigr\}$ and the pairs of paths $\bigl\{(\xi_{AC'},\xi_{BD'})\bigr\}$, which intersect as in~$(3)$ after the translations in~$(1)$ and~$(2)$. The details are straightforward.

\smallskip

Finally, the projection conditions on~$\Phi$ as in the lemma are straightforward.
Indeed, we effectively swap parts of lattice paths:  \. $\xi_{GC}$ \ts with
\ts $\ze_{GC'}$, and \. $\xi_{G'D}$ \ts with
\ts $\mu_{G'D'}$.  Since path \ts $\xi_{GC}$ \ts is shifted path \ts $\mu_{G'D'}$,
and path \ts $\xi_{G'D}$ \ts is shifted path \ts $\ze_{GC'}$, this implies both conditions.
\end{proof}

\medskip

\begin{proof}[Proof of Theorem~\ref{t:main}]
In the notation of Lemma~\ref{l:main}, set \ts $a=b=0$, so both points
\ts $A=B$ \ts lie at the origin.  Further, set \ts $r=1$, $c=k+1$ and $d=k-1$,
so \ts $C=(m,k+1)$, \ts $C'=D'=(m,k)$ \ts and \ts $D=(m,k-1)$. In this case,
the injection \ts $\Phi$ \ts shows that the number of pairs of lattice
paths \ts $O\to (m,k+1)$ \ts and \ts  $O\to (m,k-1)$, is less of equal than
the number of lattice paths \ts $O\to (m,k)$, squared.

We are not done, however, as our paths overcount the paths
implied by the probabilities in the theorem, since we consider only \emph{the first time}
by the lattice random walk~$X_t$ is at~$\be$.  Recall that
$\Phi$ preserves the number of horizontal edges which project onto point~$m$
and onto~$[m-1,m]$.  The assumption in the theorem that $T$ is
the first time the walk is at~$\be$ can be translated as having exactly one
of point $(m,\ast)$ and exactly one edge \ts $(m-1,\ast)\to (m,\ast)$ \ts corresponding
to the last step of the walk~$X_t$.  Therefore, this property is also preserved
under~$\Phi$.  This completes the proof.   \end{proof}

\smallskip

\begin{rem}{\rm The level of generality in Lemma~\ref{l:main} may seem like an overkill
as we only use a special case in the proof of the theorem.  In reality, explaining
the special case needed for Theorem~\ref{t:main} is no easier than the general case
in the lemma.  In fact, setting \ts $A=B$ \ts only makes it more
difficult to keep track of the paths.  Furthermore, other properties in the
lemma are used heavily in the next section.  }\end{rem}

\medskip

\begin{proof}[Sketch of proof of Lemma~\ref{l:Fomin}]
Let \ts $\Om\ssu \zz^2$ \ts be a simply connected region and let \ts
$\ga: P\to Q$ be a lattice walk in~$\Om$, where \ts $P,Q\in \Om$.
Define a \defn{loop-erased walk}
\ts $\LE(\ga)$ \ts by removing cycles as they appear in~$\ga$.  Formally,
take the first self-intersection point~$X$, where \. $\ga: P\to_{\ga'} X \to_{\ga''} X \to_{\ga'''}  Q$,
and remove part~$\ga''$.  Repeat this until the resulting path \ts $\LE(\ga)$ \ts has no
self-intersections.

Suppose \ts $P_1,Q_1,Q_2,P_2\in \partial \Om$ \ts are points on the boundary
(oriented clockwise) and in this order.  For a pair of paths \ts $(\ga_1,\ga_2)$,
$\ga_1: P_1\to Q_2$, $\ga_2: P_2\to Q_1$, note that \ts $\ga_1$ \ts and
\ts $\ga_2$ \ts intersect by planarity, and so do \ts $\LE(\ga_1)$ \ts and \ts $\ga_2$.
Let \ts $X$ \ts be the intersection point of \ts $\ga_2$ \ts and \ts $\LE(\ga_1)$ \ts
which lies closest to~$P_1$ along the path \ts $\LE(\ga_1)$.
Note that point~$X$ can appear multiple times on \ts $\ga_1\supseteq \LE(\ga_1)$.

Consider the point \ts $X\in \ga_1$ \ts such
that the edge \ts $Y\to X$ \ts in~$\ga_1$ is not deleted in~$\LE(\ga_1)$.
Similarly, choose the first~$X$ on~$\ga_2$.  This defines
the \defn{key intersection} of paths \ts $\ga_1$ \ts and \ts $\ga_2$.
As in the lemma, swap the future of these paths to obtain paths \ts
$(\ga_1^\ast,\ga_2^\ast)$.
To see that the map \. $(\ga_1,\ga_2) \ts \to \ts (\ga_1^\ast,\ga_2^\ast)$ \.
in an injection, note that it is invertible  on all pairs \ts $(\ga_1^\ast,\ga_2^\ast)$
such that \ts $\LE(\ga_1^\ast)$ \ts intersects~$\ga_2^\ast$.  We omit the details.
\end{proof}

\smallskip

\begin{rem}{\rm There is a much larger probabilistic context in which
the \emph{loop-erased random walk} plays a prominent role, see
e.g.~\cite[$\S$11]{LL} and~$\S$\ref{ss:finrem-hist}.
}\end{rem}

\bigskip

\section{Large example and subtleties in the proof}
The construction in the proof above may seem excessively complicated at first,
given that the map \ts $\Phi$ \ts is easy to define in the example in
Figure~\ref{f:simpllistic}.  Indeed, in that case one can simply shift \ts
$\xi_{BD}$ \ts up \ts to define path \ts $\zeta_{B'D'}$, find the last
(only in this case) intersection point~$G$ with \ts $\xi_{AC}$, and swap
the futures of these paths as we do in~$(4)$ and~$(5)$.  Voila!

\smallskip

Unfortunately, this simplistic approach does not work for multiple
reasons.  Let's count them here:

\medskip

\nin
{\small (\textit{i})} \ts Path \ts $\zeta_{GC'}$ \ts does not have to be
inside~$\Ga$.  This is why we defined point~$F$ in~$(2)$.

\smallskip

\nin
{\small (\textit{ii})} \ts Path \ts $\zeta_{G'D'}$ \ts does not have to be
inside~$\Ga$.  This is why we defined path~$\chi$ and point~$E$ in~$(1)$.

\smallskip

\nin
{\small (\textit{iii})} \ts Paths \ts $\chi$  \ts and \ts $\xi_{AC}$ \ts
do not have to intersect at all.  This is why we defined path
\ts \ts $\wh \chi$ \ts in~$(1)$.

\smallskip

\nin
{\small (\textit{iv})} \ts Paths \ts $\zeta_{B'C'}$ \ts and \ts $\xi_{AC}$ \ts
do have to intersect for geometric reason, since \ts $(a-b) \leq  (c-d)-r$.
On the other hand, paths \ts $\xi_{EC}$ \ts and \ts $\zeta_{FC'}$ \ts intersect
for topological reasons.  This is why in~$(3)$ we consider a connected
component of~$\La$ between paths \ts $\eta_+$ \ts and \ts $\wh\chi$.  Note that
the latter can in fact intersect, possibly multiple times.

\smallskip

\nin
{\small (\textit{v})} \ts Paths \ts $\zeta_{B'C'}$ and \ts $\xi_{AC}$ \ts
 can have
multiple loops intersecting each other in multiple ways.  There is no
natural way to define the ``last intersection'' which would be easy to
reverse.  This is why in~$(3)$  we invoked Lemma~\ref{l:Fomin},
whose proof requires loop-erased walk and symmetry breaking.\footnote{In the first
draft of the paper we were not aware of the issue \ts {\small (\textit{v})}, only to discover
it when lecturing on the result.}

\medskip


To help the reader understand the issues \ts {\small (\textit{i})},  \ts {\small (\textit{ii})}
and \ts {\small (\textit{iv})}, consider a large example in Figure~\ref{f:steps}.
Here paths \ts $\xi_{AC}$ \ts and \ts $\chi$ \ts intersect multiple times in step~$(1)$
defining~$E$.  Then, in step~$(2)$, path \ts $\zeta_{B'C'}$ \ts intersects the boundary
\ts $\eta_+$ \ts multiple times.  Note that~$F$ is defined as the last intersection
along path \ts $\zeta_{B'C'}$, not along \ts $\eta_+$\ts.  The same property applies
to~$E$, even if in the example it is the last intersection on both paths.

What exactly goes wrong in \ts {\small (\textit{v})} \ts in the definition of~$G$
is rather subtle and we leave this as an exercise to the reader.
Note that there is no issue similar to \ts {\small (\textit{v})} \ts in
the definition of points~$E$ and~$F$, since the boundaries \ts $\eta_\pm$ \ts
are $x$-monotone.

\medskip

\begin{rem}
Note that we explicitly use $x$-monotonicity of the boundary paths \ts $\eta_\pm$\ts.
In~$\S$\ref{ss:gen-boundary} below, we address what happens when the boundary is
not $x$-monotone.
\end{rem}

\begin{figure}[hbt]
\includegraphics[width=7.2cm]{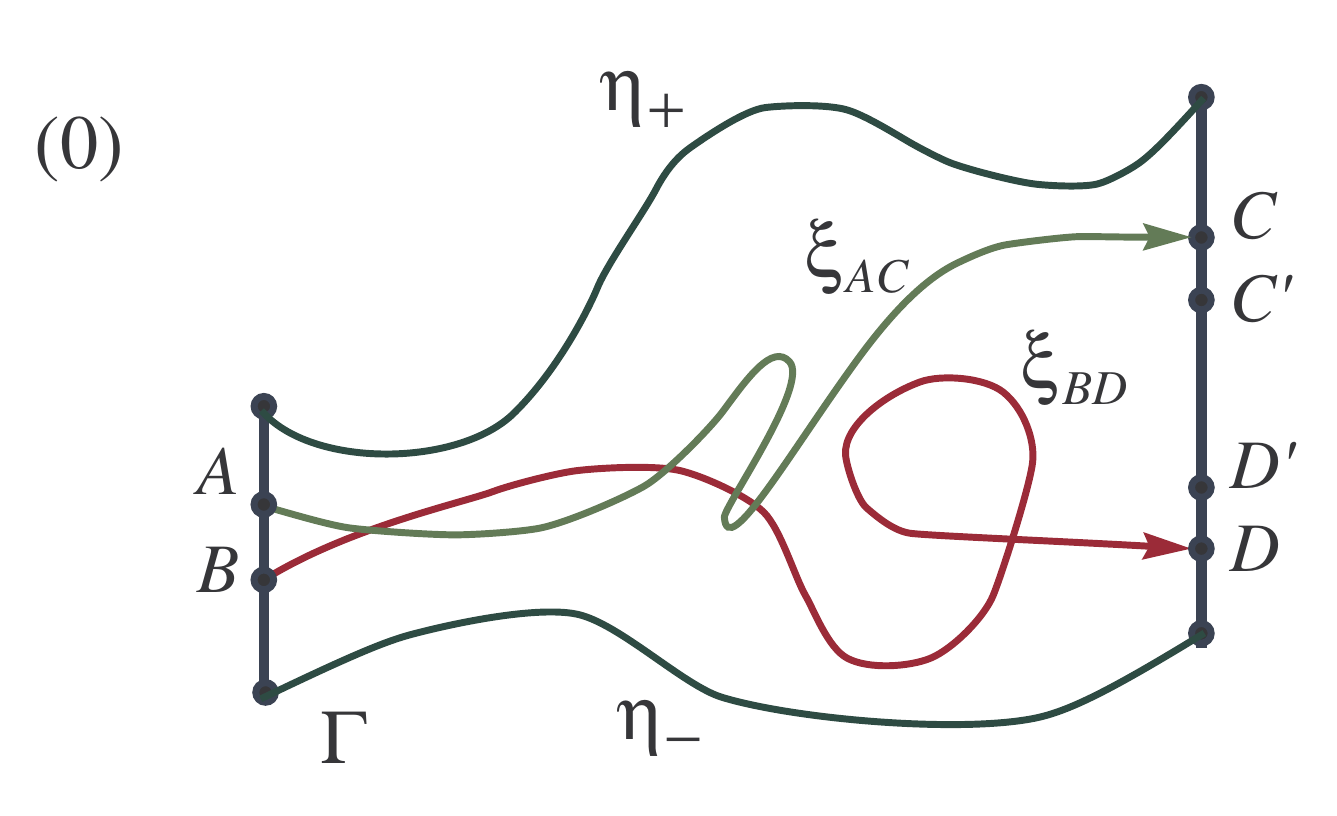} \qquad \includegraphics[width=7.2cm]{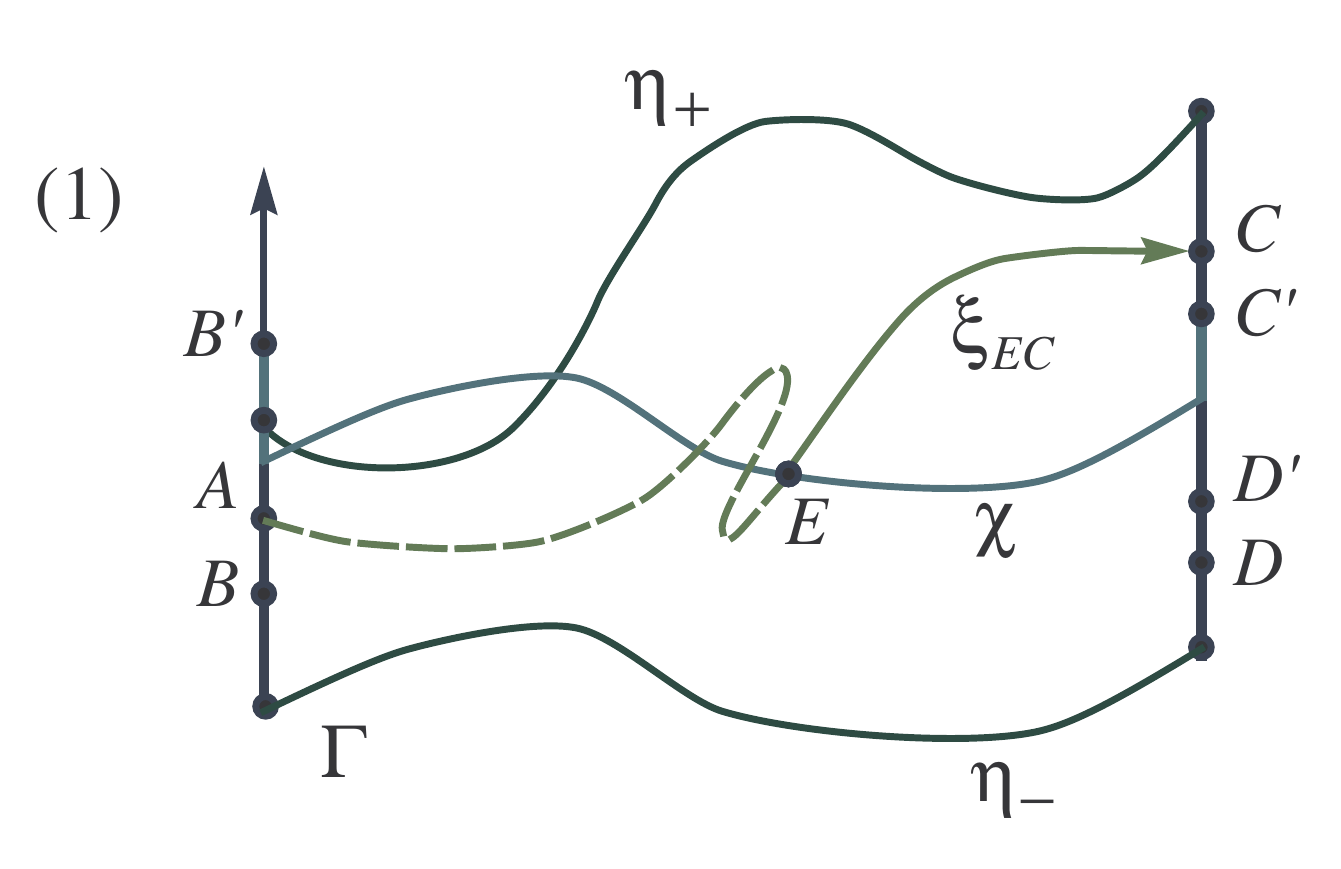} \\
\includegraphics[width=7.2cm]{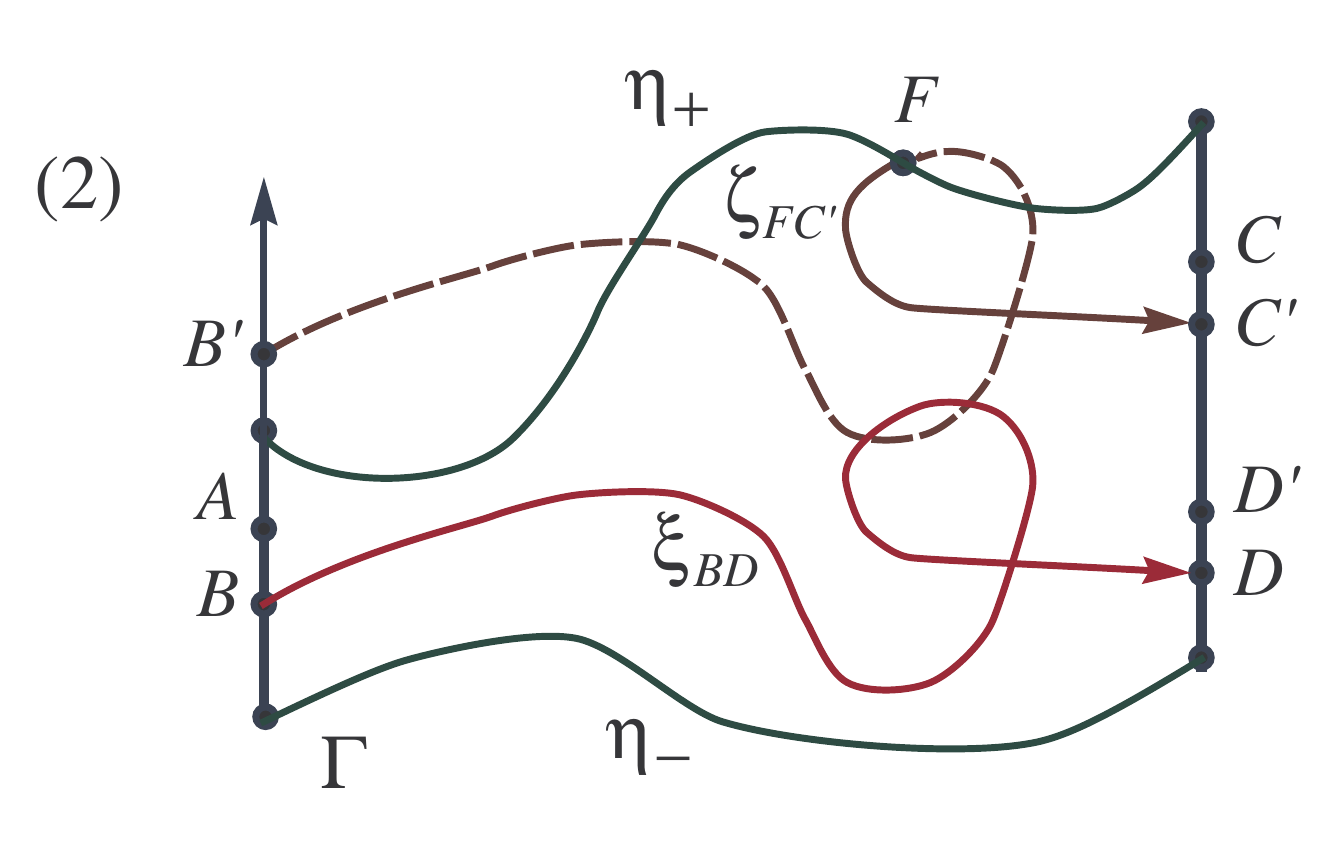}\qquad \includegraphics[width=7.2cm]{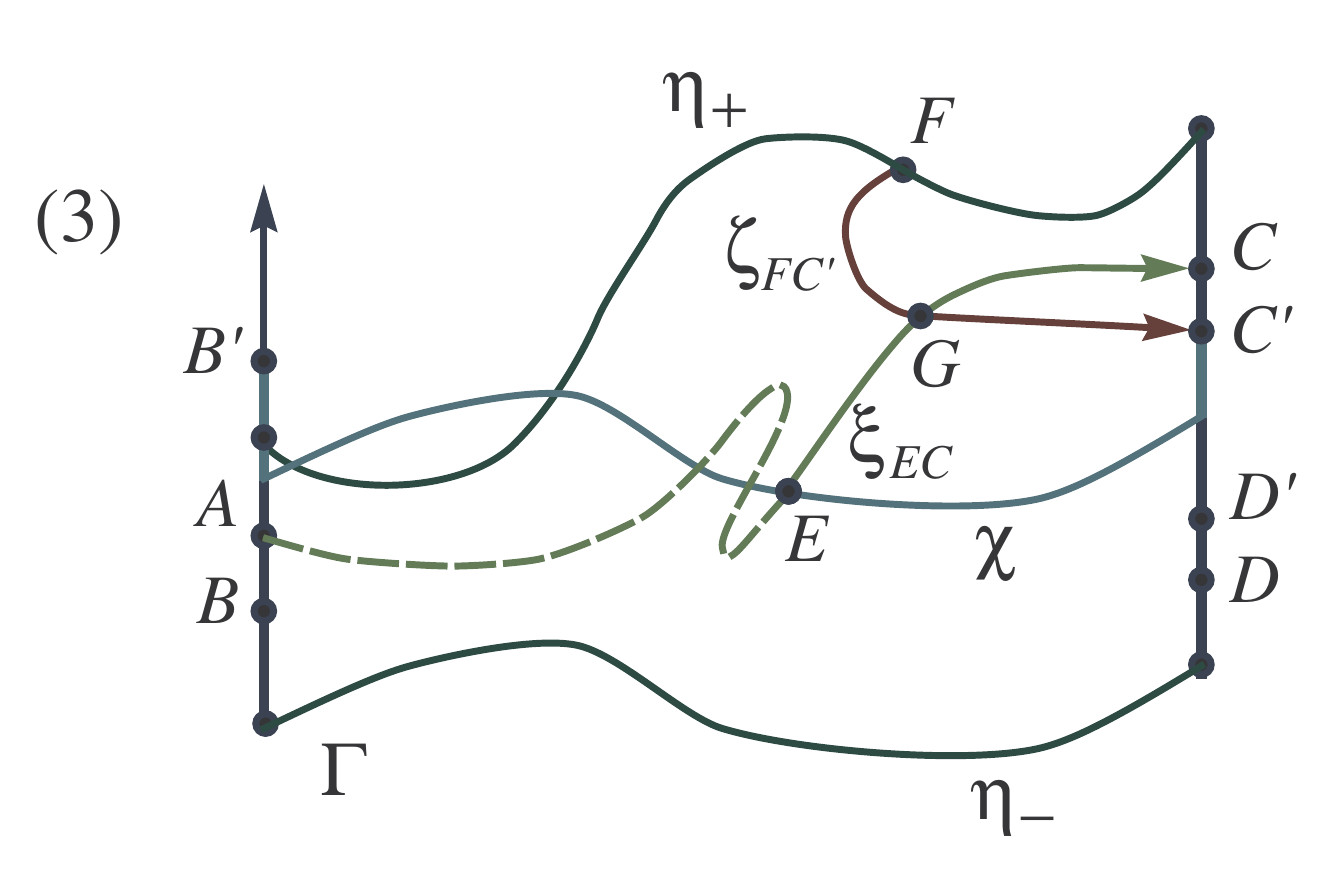} \\
\includegraphics[width=7.2cm]{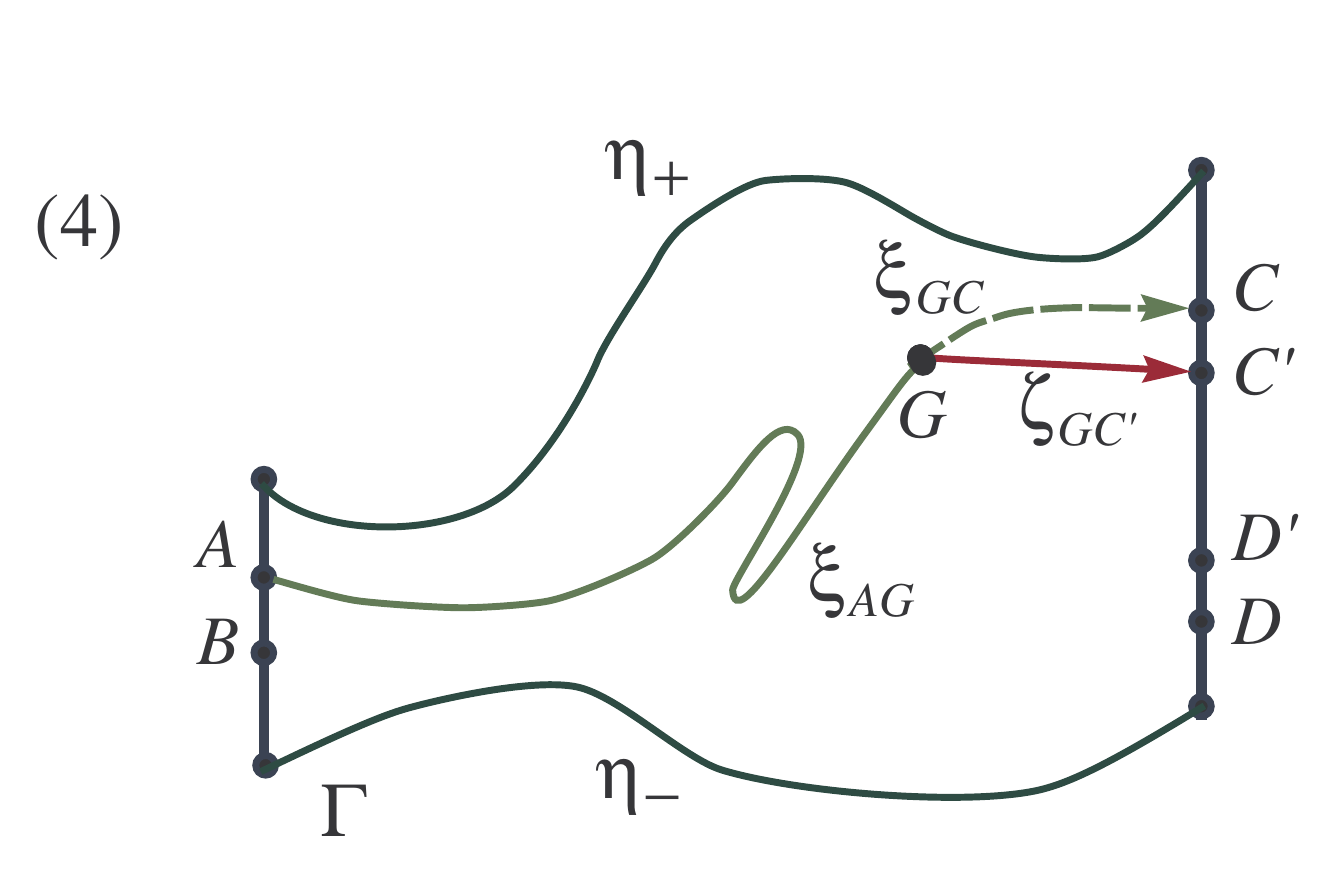}\qquad \includegraphics[width=7.2cm]{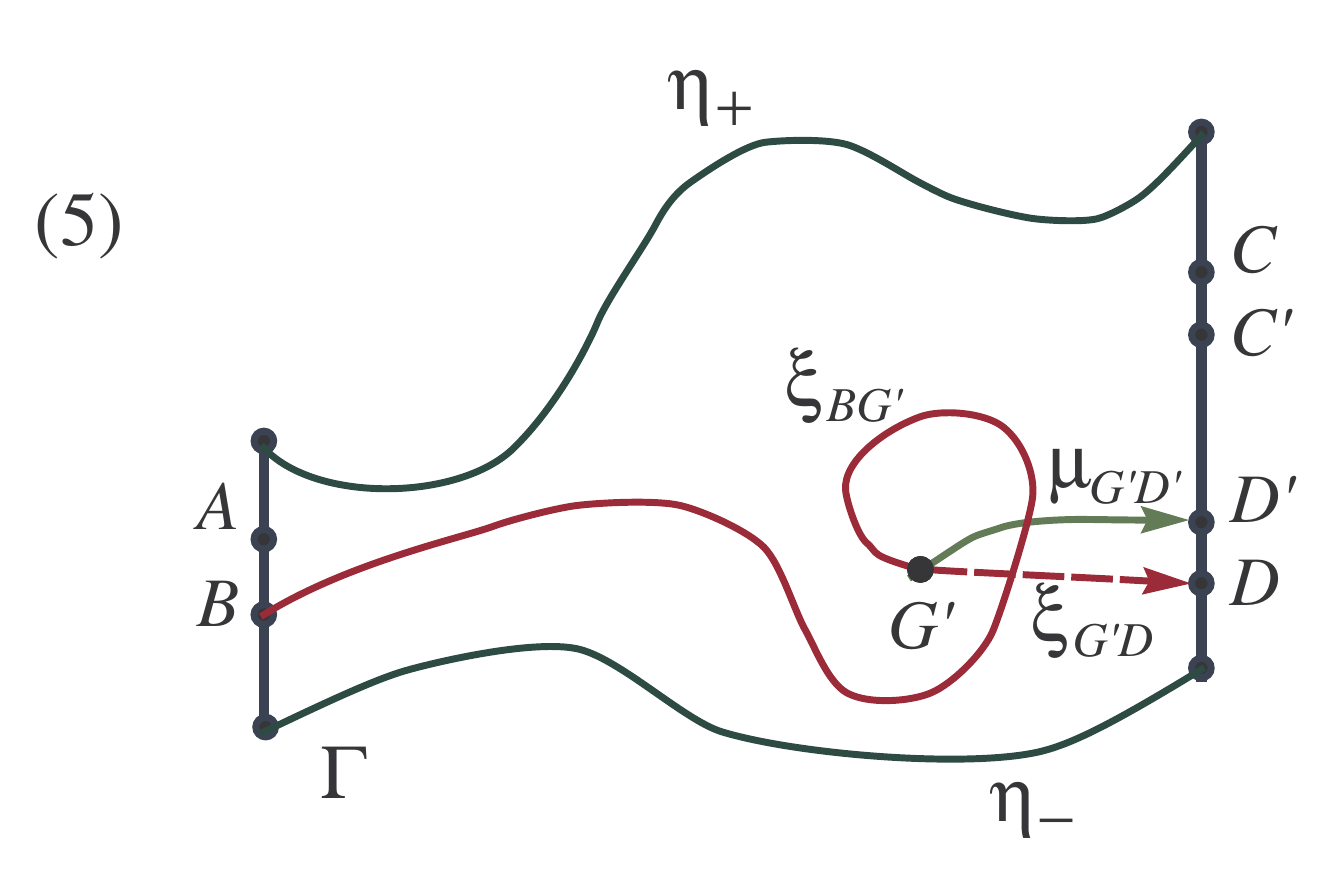}
	\caption{Steps of the construction of injection~$\Phi$ in the proof of the lemma.}
	\label{f:steps}
\bigskip

\end{figure}

\bigskip

\section{Generalizations and applications} \label{sec:gen}

\subsection{General transition probabilities}\label{ss:gen-trans}
In the notation of the introduction, consider a more general random walk \ts
$X_t$ \ts which moves to neighbors with general (not necessarily uniform)
\emph{transition probabilities}:
$$
\Pb\bigl[(i,j) \to (i\pm 1,j)\bigr] \. = \. \pi_{\pm}(i,j), \qquad
\Pb\bigl[(i,j) \to (i,j\pm 1)\bigr] \. = \. \om_{\pm}(i,j),
$$
with obvious constraints \ts $\pi_{\pm}(i,j), \ts \om_{\pm}(i,j)\ge 0$,
and
$$
\pi_{+}(i,j) \. + \. \pi_{-}(i,j)\. + \. \om_{+}(i,j) \. + \. \om_{-}(i,j) \, = 1,
$$
for all \ts $(i,j)\in \Ga$.
We say these transition probabilities are \emph{$y$-invariant} if they
are translation invariant with respect to  vertical shifts:
$$
\pi_{\pm}(i,j) \. = \pi_{\pm}(i,j'), \quad  \om_{\pm}(i,j) \. = \. \om_{\pm}(i,j') \quad
\text{for all \, $i,\ts j$ \. and \. $j'$.}
$$

\smallskip

\begin{thm}\label{t:trans}
Let \ts $\Ga\ssu \zz^2$ \ts be the lattice region as in Theorem~\ref{t:main}. Let
\ts $\{X_t\}$ \ts be the lattice random walk which
starts at the origin \ts $X_0=O \in \al$, moves according to
$y$-invariant transition probabilities \ts $\pi_{\pm}(i,j)$,
\ts $\om_{\pm}(i,j)$ \ts as above, and is absorbed
whenever \ts $X_t$ \ts tries to exit the region~$\ts\Ga$.  Denote by \ts $T$ \ts the
first time~$t$ such that \ts $X_t \in \be$, and let \ts $p(k)$ \ts be
the probability that $X_T=(m,k)$.  Then \ts $\{p(k)\}$ \ts is log-concave:
$$p(k)^2 \, \ge \, p(k+1) \. p(k-1) \qquad \text{for all \ $k\in \zz$, \
such that \ \ $(m,k\pm 1)\ts \in \be$}.
$$
\end{thm}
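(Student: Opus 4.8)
The plan is to rerun the proof of Theorem~\ref{t:main} essentially verbatim, but now weighting each lattice path by the probability that the walk traverses it. For a lattice path $\xi$ inside $\Ga$, let $\wgt(\xi)$ be the product over the steps of $\xi$ of the corresponding one-step transition probability, so that an eastward step from $(i,j)$ contributes $\pi_+(i,j)$, a westward step $\pi_-(i,j)$, and vertical steps contribute $\om_\pm(i,j)$. By the Markov property, the probability that the walk follows a given first-passage path $\xi\colon O\to(m,k)$ and is absorbed at $\beta$ equals $\wgt(\xi)$, whence $p(k)=\sum_\xi \wgt(\xi)$, summed over all lattice paths $\xi\colon O\to(m,k)$ that lie in $\Ga$ and meet $\beta$ only at their endpoint. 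The crucial structural fact is that, by $y$-invariance, $\wgt(\xi)$ depends only on the multiset of pairs $(\text{direction},\,x\text{-coordinate})$ carried by the steps of $\xi$; in particular a vertical translation of $\xi$ leaves $\wgt(\xi)$ unchanged.

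First I would upgrade the conclusion of Lemma~\ref{l:main} to the statement that the injection $\Phi$ is \emph{weight-preserving} on pairs,
$$\wgt(\xi_{AC})\,\wgt(\xi_{BD}) \; = \; \wgt(\xi_{AC'})\,\wgt(\xi_{BD'}).$$
This is immediate from the explicit construction rather than from the edge-count identities as stated. Indeed, $\Phi$ only translates paths vertically and swaps the tails $\xi_{GC}\leftrightarrow\ze_{GC'}$ and $\xi_{G'D}\leftrightarrow\mu_{G'D'}$, while by construction $\xi_{GC}$ is a vertical translate of $\mu_{G'D'}$ and $\xi_{G'D}$ is a vertical translate of $\ze_{GC'}$. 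Hence the removed multiset of steps $\{\xi_{GC},\xi_{G'D}\}$ and the inserted multiset $\{\ze_{GC'},\mu_{G'D'}\}$ agree as $(\text{direction},\,x)$-multisets, so the combined $(\text{direction},\,x)$-multiset of the pair is unchanged, and with it the product of the two weights.

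Then I would specialize exactly as in the proof of Theorem~\ref{t:main}: take $a=b=0$, $r=1$, $c=k+1$, $d=k-1$, so that the hypothesis $(a-b)\le(c-d)-r$ reads $0\le 1$, and $C=(m,k+1)$, $C'=D'=(m,k)$, $D=(m,k-1)$. As there, $\Phi$ preserves the first-passage condition, which for a pair means each path has exactly one edge projecting onto $[m-1,m]$ and no vertical edge at column $m$; since $\Phi$ preserves the totals of both quantities and every output path terminates on $\beta$, these totals are forced to distribute as one edge onto $[m-1,m]$ and zero vertical edges at column $m$ per path. Restricting the weight-preserving injection $\Phi$ to first-passage pairs and summing weights then gives
$$p(k+1)\,p(k-1) \;=\; \sum \wgt(\xi_{AC})\,\wgt(\xi_{BD}) \;\le\; \sum \wgt(\xi_{AC'})\,\wgt(\xi_{BD'}) \;=\; p(k)^2,$$
where the outer equalities use that the pair-sums factor over the two independent endpoints.

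The main thing to get right, and the only place the argument genuinely differs from Theorem~\ref{t:main}, is the weight bookkeeping. The identities recorded in Lemma~\ref{l:main} track only \emph{undirected} edge counts per column; for horizontal steps this loses nothing, since a path from column $0$ to column $m$ crosses each gap $[i,i+1]$ with net $+1$, so the eastward/westward split is forced by the endpoints. For vertical steps, however, the up/down split at a column is not pinned down by the endpoints, so when $\om_+\ne\om_-$ the undirected identities are genuinely insufficient and one must invoke the finer translate-and-swap structure above. This is precisely where $y$-invariance is used, as it is what makes the vertical translations in $\Phi$ weight-neutral. Everything hard has already been absorbed into Lemmas~\ref{l:main} and~\ref{l:Fomin}, so beyond this bookkeeping I expect no further obstacle.
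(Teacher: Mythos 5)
Your proof is correct and takes essentially the same approach as the paper: the paper's own proof of Theorem~\ref{t:trans} is the single remark that the argument for Theorem~\ref{t:main} goes through verbatim because $y$-invariance is precisely the property preserved by the injection $\Phi$ of Lemma~\ref{l:main}. Your extra observation --- that the undirected per-column edge counts recorded in Lemma~\ref{l:main} do not by themselves determine the up/down split of the vertical steps when $\om_+\ne\om_-$, so one must instead use the finer fact that $\Phi$ swaps each tail with a vertical translate and hence preserves the combined multiset of (direction, $x$-coordinate) data --- is a correct and worthwhile sharpening of a point the paper leaves implicit.
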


\smallskip

The proof of the theorem follows verbatim the proof of Theorem~\ref{t:main}
in the previous section, since $y$-invariance is exactly the property
preserved by the injection $\Phi$ in Lemma~\ref{l:main}.  \ $\sq$

\medskip

\subsection{Monotone walks}\label{ss:gen-endpoints}
Let \ts $\{X_t\}$ \ts be a random walk as above with $y$-invariant transition
probabilities.  We say that the walk is \emph{monotone} if \ts $\pi_-(i,j)=\om_-(i,j)=0$
\ts for all \ts $(i,j)\in \Ga$.
\smallskip

\begin{cor}\label{c:mono}
Let \ts $\Ga\ssu \zz^2$ \ts be the lattice region as in Theorem~\ref{t:main},
and let \ts $\ga = \{x=\ell\}\cap \Ga$ \ts be a vertical interval,
\ts $0< \ell < m$.  Fix points \ts $A=(0,a) \in \al$ \ts
and  \ts $B=(m,b)\in \be$.  Let \ts $\{X_t\}$ \ts be a monotone lattice
random walk which starts at point \ts $X_0=A \in \al$, moves according to
$y$-invariant transition probabilities \ts $\pi_{+}(i,j)$,
\ts $\om_{+}(i,j)$ \ts as above, is absorbed
whenever \ts $X_t$ \ts tries to exit the region~$\ts\Ga$, and arrives
at $B$ at time \ts $u=(m+b-a)$: \. $X_u=B$.  Denote by \ts $T$ \ts the
first time~$t$ such that \ts $X_t \in \ga$, and let \ts $q(k)$ \ts be
the probability that $X_T=(\ell,k)$.  Then \ts $\{q(k)\}$ \ts is log-concave:
$$q(k)^2 \, \ge \, q(k+1) \. q(k-1) \qquad \text{for all \ $k\in \zz$, \
such that \ \ $(\ell,k\pm 1)\ts \in \ga$}.
$$
\end{cor}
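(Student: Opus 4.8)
The plan is to reduce the statement to two instances of the log-concavity already obtained, by factoring the exit probability at the crossing of $\ga$ using the Markov property. First I would record this factorization. Since the walk is monotone, its $x$-coordinate is nondecreasing, so it enters the column $\{x=\ell\}$ exactly once; writing $(\ell,k)$ for the first lattice point of $\ga$ it visits, the step into $(\ell,k)$ is the rightward edge $(\ell-1,k)\to(\ell,k)$. Moreover every monotone path from $A=(0,a)$ to $B=(m,b)$ has exactly $m$ horizontal and $b-a$ vertical edges, hence length exactly $u=m+b-a$, so conditioning on $X_u=B$ is the same as conditioning on the walk reaching $B$. By the Markov property, the conditional probability factors as
$$ q(k) \; = \; \frac{1}{Z}\, f(k)\, g(k), $$
where $f(k)$ is the total $y$-invariant weight of monotone paths $A\to(\ell,k)$ inside $\Ga$ whose last step is the rightward edge into $(\ell,k)$, where $g(k)$ is the total weight of monotone paths $(\ell,k)\to B$ inside $\Ga$, and where $Z=\Pb[X_u=B]=\sum_k f(k)g(k)$ is independent of~$k$.

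Since a product of two nonnegative log-concave sequences is again log-concave, it suffices to prove that $\{f(k)\}$ and $\{g(k)\}$ are each log-concave in~$k$. For $f$, I would work in the sub-region $\Ga_{\le\ell}:=\Ga\cap\{0\le x\le\ell\}$, which is again admissible: its vertical sides are $\al$ and $\ga$, and its top and bottom are the truncations of the $x$-monotone paths $\eta_\pm$. Because the walk is monotone, $\pi_-\equiv\om_-\equiv 0$, so every non-monotone lattice path has weight zero and $f(k)$ equals the $y$-invariant weighted count of all lattice paths $A\to(\ell,k)$ of Lemma~\ref{l:main}. Applying Lemma~\ref{l:main} on $\Ga_{\le\ell}$ with coincident sources $A=B=(0,a)$ and sinks $C=(\ell,k+1)$, $C'=D'=(\ell,k)$, $D=(\ell,k-1)$, $r=1$ (the hypothesis $(a-a)\le(c-d)-r=1$ holds), the injection $\Phi$ yields $f(k+1)f(k-1)\le f(k)^2$. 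Exactly as in the closing paragraph of the proof of Theorem~\ref{t:main}, $\Phi$ preserves the number of horizontal edges projecting onto $[\ell-1,\ell]$ together with the single-last-step structure, so it respects the restriction ``the last step is the rightward edge into $\ga$'', and hence restricts to the sets counted by~$f$.

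For $g$ the source height varies while the sink is fixed, which is the transpose of Lemma~\ref{l:main}. I would handle this by reflecting the sub-region $\Ga_{\ge\ell}:=\Ga\cap\{\ell\le x\le m\}$ through the map $\si(x,y)=(m+\ell-x,\,C-y)$ and reversing the time-direction of the walk: a monotone path $(\ell,k)\to B$ corresponds bijectively to a monotone path from the fixed point $\si(B)=(\ell,C-b)$ to the variable point $\si(\ell,k)=(m,C-k)$ inside $\si(\Ga_{\ge\ell})$, which is again admissible (after a harmless horizontal translation placing its left side on $\{x=0\}$). The weights pull back to $y$-invariant weights, since $\si$ only permutes the columns and reverses the $y$-axis, and $y$-invariance is insensitive to both. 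Writing $\tilde g(h)$ for the weighted count of monotone paths $\si(B)\to(m,h)$, Lemma~\ref{l:main} (again with coincident sources and $r=1$) gives log-concavity of $\tilde g$; since $g(k)=\tilde g(C-k)$, this is precisely $g(k+1)g(k-1)\le g(k)^2$. Multiplying the two factorized log-concavities then completes the argument.

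The main obstacle I anticipate is bookkeeping rather than conceptual. One must verify that the reflection $\si$ genuinely carries $y$-invariant transition weights to $y$-invariant weights and preserves the admissible region type (vertical left and right sides, $x$-monotone top and bottom), and that the first-crossing restriction defining $f$ really is preserved by the injection~$\Phi$ --- this last point mirrors the end of the proof of Theorem~\ref{t:main}, where the preservation of horizontal edges over the final interval guarantees that the single-last-step condition survives. The deeper combinatorial difficulty, namely the symmetry-breaking key-intersection construction in step~$(3)$ of Lemma~\ref{l:main}, is already discharged and can be invoked here as a black box.
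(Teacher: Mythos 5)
Your proposal is correct and follows essentially the same route as the paper: split $\Ga$ at the column $x=\ell$, use monotonicity and the Markov property to factor $q(k)$ as a product of two exit probabilities (the right-hand factor viewed in the sub-region rotated $180^\circ$, which is exactly your reflection $\si$), and invoke the already-established log-concavity for each factor. The only cosmetic differences are that the paper cites Theorem~\ref{t:trans} for each factor where you unwind it back to Lemma~\ref{l:main}, and that the paper takes the right sub-region as $\{\ell-1\le i\le m\}$ so as to carry the crossing edge, whereas you fold that edge into the first factor $f(k)$.
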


\smallskip

\begin{proof}
Define two regions: \. $\Ga_1: = \{(i,j) \in \Ga~:~0\le i \le \ell\}$ \.
and \. $\Ga_2: = \{(i,j) \in \Ga~:~\ell-1 \le i \le m\}$.  Note that the
regions are overlapping along interval~$\ga$ and interval \ts
$\ga':=\{(\ell-1,j) \in \Ga\}$, see Figure~\ref{f:tri}.
Suppose \ts $(\ell-1,k)\to (\ell,k)$ \ts
is the unique edge of the lattice path \ts $A\to B$ \ts which projects onto
\ts $[\ell-1,\ell]$.  In the notation of Theorem~\ref{t:trans}, we have
$$
q(k) \. = \. p_1(k) \. p_2(k),
$$
where \ts $p_1(k)$ \ts and \ts $p_2(k)$ \ts are the exit probabilities
in the region $\Ga_1$ and in the region $\Ga_2$ rotated~$180^\circ$.
Since \ts $\{p_i(k)\}$ \ts are log-concave by Theorem~\ref{t:trans},
so is \ts $\{q(k)\}$, as desired.
\end{proof}

\begin{figure}[hbt]
\includegraphics[width=14.5cm]{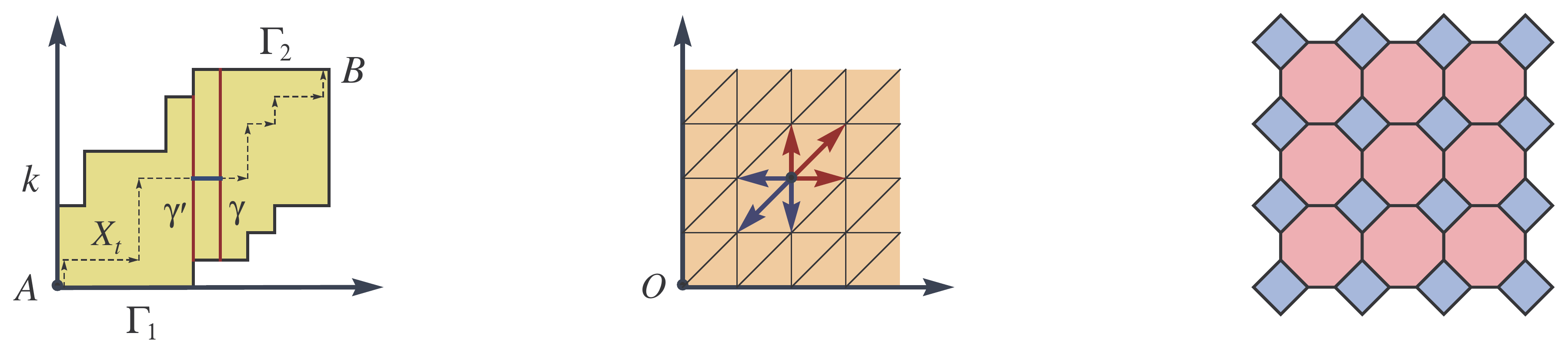}
	\caption{{\underline {Left}}: \ts a monotone walk $X_t$ crossing the vertical
$\ga$ and $\ga'$ (red lines) at height $k$.
{\underline {Middle}}: \ts steps in the triangular lattice.  {\underline {Right}}: \ts the square--octagon lattice.}
	\label{f:tri}
\end{figure}

\medskip

\subsection{General lattices}\label{ss:gen-triangular}
One can further generalize random walks from $\zz^2$ to general lattices.
For example, we can include steps \ts $\pm (1,1)$ \ts
and $y$-invariant transition probabilities
$$\Pb\bigl[(i,j) \to (i\pm 1,j\pm 1)\bigr] \. = \. \nu_{\pm}(i,j),
$$
such that \.
$\nu_{\pm}(i,j) \ts = \ts \nu_{\pm}(i,j')$, for all~$i,j$~and~$j'$.
One can view this result as a random walk on the \emph{triangular lattice} instead,
see Figure~\ref{f:tri}.  Both the statement and the proof of Theorem~\ref{t:trans}
extend verbatim once the reader observes that all lattice paths which
must intersect for topological reasons do in fact intersect at lattice points.

Similarly, one can use this approach and general transition probabilities
to set some of them zero and obtain random walks on other lattices.
For example, one can obtain the \emph{square--octagon lattice} as in the
figure by restricting the walks to vertices of the lattice.
Theorem~\ref{t:trans} applies to this case then.  We omit the details.

\medskip

\subsection{Dyck and Schr\"oder paths}\label{ss:gen-dyck}
In the context of \emph{enumerative combinatorics},
it is natural to consider lattice paths with steps \ts
$(1,1)$ \ts and \ts $(1,-1)$.  Such paths are called \defn{Dyck paths}.
When step \ts $(2,0)$ \ts is added, such paths are called \defn{Schr\"oder paths}.

Fix two points \ts $A=(0,0)$ \ts and  $B=(m,b) \in \zz^2$ \ts and two
nonintersecting Dyck paths \ts $\eta_\pm : A\to B$.  Note that \ts
$m+b \equiv 0\pmod{2}$, since otherwise there are no such paths.
Denote by $\Ga$ the region between these paths.  Let $0< \ell < m$,
and denote by \ts $\aN(k)$ \ts the number of Dyck paths \ts
\ts $\zeta: A\to B$ \ts which lie inside~$\Ga$ and contain point~$(\ell,k)$.


\smallskip

\begin{cor}  \label{c:dyck}
In the notation above, the sequence \ts $\{\aNr(k)\}$ \ts is log-concave:
$$
\aNr(k)^2 \, \ge \, \aNr(k+2) \. \aNr(k-2) \qquad \text{for all \ $k\in \zz$, \
such that \ \ $(\ell,k\pm 2)\ts \in \ga$}.
$$
\end{cor}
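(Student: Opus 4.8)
The plan is to reduce Corollary~\ref{c:dyck} to the monotone-walk statement of Corollary~\ref{c:mono}, in its triangular-lattice form from~$\S$\ref{ss:gen-triangular}, via a shear that straightens the diagonal Dyck steps. The first observation is a parity constraint: a Dyck path from $A=(0,0)$ uses steps $(1,1)$ and $(1,-1)$, each of which preserves the parity of $x+y$; hence every vertex $(\ell,k)$ of such a path has $\ell+k$ even, and the two neighbouring heights on the line $\{x=\ell\}$ that can host a Dyck path are $(\ell,k\pm 2)$. This is exactly why the asserted inequality shifts the index by $2$.

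Next I would apply the shear $\phi\colon(x,y)\mapsto\bigl(x,\tfrac{x+y}{2}\bigr)$. Restricted to the points with $x+y$ even it is a bijection onto $\zz^2$, it fixes every $x$-coordinate, and it sends the Dyck steps $(1,1)$ and $(1,-1)$ to the monotone steps $(1,1)$ and $(1,0)$. Thus $\phi$ turns Dyck paths inside $\Ga$ into monotone paths on the triangular lattice (with the vertical step switched off), carries the two nonintersecting $x$-monotone boundaries $\eta_\pm$ to two nonintersecting $x$-monotone boundaries, and sends the tips $A=(0,0)$, $B=(m,b)$ to $\phi(A)=(0,0)$ and $\phi(B)=(m,\tfrac{m+b}{2})$, which play the role of the (degenerate) vertical boundary intervals. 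The region $\phi(\Ga)$ is therefore of the shape required by Theorem~\ref{t:trans}, the crossing line $\{x=\ell\}$ is preserved, and the point $(\ell,k)$ goes to $(\ell,\tfrac{\ell+k}{2})$, so the shift $k\mapsto k\pm2$ becomes the unit vertical shift in the image.

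Under this dictionary $\aNr(k)$ is the number of monotone triangular-lattice paths $\phi(A)\to\phi(B)$ inside $\phi(\Ga)$ through $(\ell,\tfrac{\ell+k}{2})$, which is precisely the crossing count governed by (the triangular version of) Corollary~\ref{c:mono}. Two simplifications make the counting clean: each monotone path crosses $\{x=\ell\}$ exactly once, because every step increases $x$ by $1$, so there is no ``first time'' subtlety; and all monotone paths from $\phi(A)$ to $\phi(B)$ use the same number $\tfrac{m-b}{2}$ of horizontal steps and $\tfrac{m+b}{2}$ of diagonal steps, so under uniform transition weights the exit probability is proportional to the path count. Hence Corollary~\ref{c:mono} gives log-concavity of $\{\aNr(k)\}$ in the image height, which is the desired $\aNr(k)^2\ge\aNr(k+2)\,\aNr(k-2)$. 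Equivalently, one can bypass probabilities and factor $\aNr(k)=L(k)\,R(k)$ into the left count $\phi(A)\to(\ell,\tfrac{\ell+k}{2})$ and the right count $(\ell,\tfrac{\ell+k}{2})\to\phi(B)$, apply the combinatorial core of Theorem~\ref{t:trans} (Lemma~\ref{l:main}) to each, and use that a product of log-concave sequences is log-concave.

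The main obstacle is to confirm that the injection $\Phi$ of Lemma~\ref{l:main}, which drives Theorem~\ref{t:trans}, continues to work after the shear on the triangular lattice with only the steps $(1,0)$ and $(1,1)$. One must check that $\Phi$ sends pairs of monotone paths to pairs of monotone paths: the swaps in steps~$(4)$--$(5)$ are made at a shared lattice vertex $G$ and glue together monotone pieces, so monotonicity ought to be preserved, but this needs verifying; and one must check, as claimed in~$\S$\ref{ss:gen-triangular}, that the intersections forced for topological reasons in step~$(3)$ actually occur at lattice points of the sheared lattice. The remaining points --- the parity of $\ell+k$, the equality of step multiplicities, and the preservation of $x$-monotonicity and simple connectivity under $\phi$ --- are routine.
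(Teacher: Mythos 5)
Your argument is correct and is essentially the paper's: both proofs reduce Corollary~\ref{c:dyck} to the split-at-$\{x=\ell\}$ factorization of Corollary~\ref{c:mono} and the injection of Lemma~\ref{l:main}, using that a product of log-concave sequences is log-concave. The only difference is cosmetic --- you first apply the shear $(x,y)\mapsto\bigl(x,\tfrac{x+y}{2}\bigr)$ so as to land in the triangular-lattice setting of $\S$\ref{ss:gen-triangular}, whereas the paper keeps Dyck coordinates and observes directly that the vertical translations (now by multiples of $(0,2)$) and the forced intersections at lattice points still go through; these are exactly the two verifications you flag at the end, so nothing is missing.
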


\smallskip

\begin{proof}[Proof sketch]
The proof follows verbatim the proof of Corollary~\ref{c:mono} via two observations. First, the vertical translation and topological properties used in the proof of Lemma~\ref{l:main} work with diagonal steps. Second, the intersection points of the paths are at the ends of the steps, not midway, because the Dyck paths here have endpoints on the underlying grid spanned by $(1,1)$ and $(1,-1)$ which is invariant under the $(0,2)$ translation.
\end{proof}

\smallskip

Similarly, fix two
nonintersecting Schr\"oder paths \ts $\eta_\pm : A\to B$, and
denote by $\Ga$ the region between these paths.  Let $0< \ell < m$,
and denote by \ts $\aF(k)$ \ts the number of Schr\"oder paths \ts
\ts $\zeta: A\to B$ \ts which lie inside~$\Ga$ and contain point~$(\ell,k)$. The same argument as above gives the following.
\smallskip

\begin{cor}  \label{c:sch}
In the notation above, the sequence \ts $\{\aFr(k)\}$ \ts is log-concave:
$$
\aFr(k)^2 \, \ge \, \aFr(k+2) \. \aFr(k-2) \qquad \text{for all \ $k\in \zz$, \
such that \ \ $(\ell,k\pm 2)\ts \in \ga$}.
$$
\end{cor}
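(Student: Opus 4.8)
The plan is to rerun the proof of Corollary~\ref{c:mono}, now with Schr\"oder paths in place of the monotone walk, and to reduce the statement to the log-concavity of two path-counting factors, each governed by a Schr\"oder analogue of Theorem~\ref{t:trans}.

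First I would cut the region along the column $\{x=\ell\}$ into $\Ga_1:=\{(i,j)\in\Ga : 0\le i\le \ell\}$ and $\Ga_2:=\{(i,j)\in\Ga : \ell\le i\le m\}$. Every Schr\"oder step $(1,1)$, $(1,-1)$, $(2,0)$ strictly increases the $x$-coordinate, so a Schr\"oder path $\zeta:A\to B$ meets the column $\{x=\ell\}$ in at most one vertex; it is counted by $\aFr(k)$ exactly when that vertex equals $(\ell,k)$, while paths that leap over the column by a $(2,0)$ step from $x=\ell-1$ to $x=\ell+1$ are never counted. Consequently $\zeta$ factors uniquely as a path $A\to(\ell,k)$ in $\Ga_1$ followed by a path $(\ell,k)\to B$ in $\Ga_2$, and conversely every such concatenation is admissible, giving
\[
\aFr(k)\,=\,\aFr_1(k)\.\aFr_2(k),
\]
where $\aFr_1(k)$ and $\aFr_2(k)$ count the Schr\"oder paths in the two halves, the second region rotated by $180^\circ$ to match the orientation of Theorem~\ref{t:trans}. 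Since a pointwise product of nonnegative log-concave sequences is again log-concave, it suffices to establish $\aFr_i(k)^2\ge\aFr_i(k+2)\.\aFr_i(k-2)$ for $i=1,2$.

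The only genuine obstacle is proving this Schr\"oder analogue of Lemma~\ref{l:main} for each factor. The injection $\Phi$ is built by shifting paths vertically and swapping their tails, and it transfers verbatim provided (a)~the shift respects the underlying lattice, and (b)~paths forced to meet for topological reasons actually meet at a lattice vertex. For (a), the correct shift here is by $(0,2)$, which is also why the log-concavity carries step $2$: the vertices of a Schr\"oder path issued from $A$ all lie on the even sublattice $L:=\{(x,y)\in\zz^2 : x+y\equiv 0\pmod 2\}$, and $L$ is invariant under the $(0,2)$-shift, exactly as for Dyck paths in Corollary~\ref{c:dyck}. Point (b) is the new phenomenon introduced by the step $(2,0)$: a priori such a step, running from $(c,d)$ to $(c+2,d)$, might cross another path at its interior midpoint $(c+1,d)$. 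However, that midpoint has $x+y$ odd and so lies off $L$, hence it is never a vertex of another Schr\"oder path; a short parity argument then rules out every transversal crossing that is not at a common vertex, so two Schr\"oder paths that must intersect do so at a lattice point. Equivalently, the change of coordinates $(x,y)\mapsto\bigl(\tfrac{x+y}{2},\tfrac{x-y}{2}\bigr)$ turns Schr\"oder paths into monotone lattice paths with steps $(1,0),(0,1),(1,1)$, i.e.\ walks on the triangular lattice of $\S$\ref{ss:gen-triangular}, for which the intersection-at-lattice-points property is already in hand; in these coordinates Lemma~\ref{l:main} and Theorem~\ref{t:trans} apply directly and deliver the log-concavity of $\aFr_1$ and $\aFr_2$. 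I expect this intersection claim to be the crux: it is false for an unrestricted mixture of steps and hinges on the single-diagonal, even-sublattice structure special to Schr\"oder paths.
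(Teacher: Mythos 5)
Your argument is correct and is essentially the paper's own (very terse) proof: the paper reduces Corollary~\ref{c:sch} to the argument for Corollary~\ref{c:dyck}, which in turn splits the region at the column $\{x=\ell\}$ as in Corollary~\ref{c:mono} and invokes the same two observations you make --- that the $(0,2)$ vertical shift preserves the even sublattice, and that forced intersections occur at lattice vertices rather than at step midpoints. Your parity analysis of the $(2,0)$ step and the change of coordinates to the triangular lattice of $\S$\ref{ss:gen-triangular} simply spell out details the paper leaves implicit.
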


\begin{rem}{\rm
Note that this result does not directly apply to the, otherwise similar, \defn{Motzkin paths}, with steps $(1,1)$, $(1,-1)$ and $(1,0)$. The reason is that the intersection points used in the injection $\Phi$ in Lemma~\ref{l:main} might no longer be on the underlying lattice points and appear in the middle of the steps, e.g. at $(\frac12,\frac12)$.
}
\end{rem}

\smallskip

\begin{ex}\label{ex:dyck}
{\rm Take \ts $A=(0,0)$, \ts $B=(2n,0)$, so \ts $m=2n$.  Fix maximal
and minimal Dyck paths
$$\aligned
\eta_+: \. & (0,0) \. \to \. (1,1) \. \to \. \ldots \. \to \. (n,n) \. \to \. (n+1,n-1) \. \to \. \ldots \. \to \.  (2n,0),\\
\eta_-: \. & (0,0) \. \to \. (1,-1) \. \to \. \ldots \. \to \. (n,-n) \. \to \. (n+1,-n+1) \. \to \. \ldots \. \to \.  (2n,0).
\endaligned
$$
Set \ts $\ell:=n$, which makes the picture symmetric.
Then Corollary~\ref{c:dyck} implies log-concavity of \defn{binomial coefficients} \.
$\bigl\{\binom{n}{k}, \ts 0\le k \le n\}$. On the other hand,
for the Schr\"oder paths $A\to B$,
Corollary~\ref{c:sch} implies log-concavity of \defn{Delannoy numbers} \.
$\bigl\{\rD(k,n-k), \ts 0\le k \le n\}$,
see \cite[\href{https://oeis.org/A008288}{A008288}]{OEIS}, a new enumerative
result, see $\S$\ref{ss:finrem-ec}.

Finally, let \ts $\eta_+$ \ts be as above and let
$$
\eta_-: \. (0,0) \. \to \. (1,1) \. \to \. (2,0) \. \to \. (3,1) \. \to \.\ldots \. \to \.  (2n,0).
$$
Then Corollary~\ref{c:dyck} implies log-concavity of \defn{ballot numbers} \.
$\bigl\{\rB(k,n-k), \ts n/2\le k \le n\}$, where
$$
\rB(k,n-k)\, =\,\. \frac{2k-n+1}{k+1}\ts\binom{n}{k}.
$$
}\end{ex}


\smallskip

\subsection{Boundary matters} \label{ss:gen-boundary}
First, let us note that both Theorem~\ref{t:main} and Theorem~\ref{t:trans}
easily extend to regions
without either or both of the boundaries \ts $\eta_\pm$\ts. In this case
the vertical boundaries $\al,\be$ become either rays of lines, and
the region $\Ga$ is an infinite strip in one or two directions, see Figure~\ref{f:inf}.

\smallskip

\begin{cor}\label{c:inf}
In the notation of Theorem~\ref{t:main}, let $\ts\Ga$ be an infinite strip with
one or two ends.  Then the exit probability distribution \ts $\{p(k)\}$ \ts
is log-concave.
\end{cor}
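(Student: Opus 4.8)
The plan is to deduce the infinite-strip case from the finite case of Theorem~\ref{t:main} by a truncation argument followed by a passage to the limit. Suppose first that $\ts\Ga$ is a strip with a single end, say infinite in the upward direction, so that the upper boundary $\eta_+$ is absent and $\al,\be$ are vertical rays emanating upward from the endpoints of the finite lower boundary $\eta_-$. For each integer $M>0$ I would cap the strip by the horizontal lattice segment at height $M$ joining $(0,M)$ and $(m,M)$, obtaining the finite region $\ts\Ga_M := \{(i,j)\in \Ga \ :\ j\le M\}$. This horizontal cap is $x$-monotone, the two vertical sides of $\ts\Ga_M$ are intervals in $\{x=0\}$ and $\{x=m\}$, and the lower boundary is still $\eta_-$; hence $\ts\Ga_M$ satisfies all the hypotheses of Theorem~\ref{t:main}. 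When $\ts\Ga$ has two ends I would additionally cap below at height $-M$, and the same remarks apply.

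Let $p_M(k)$ be the exit probability in $\ts\Ga_M$, i.e.\ the probability that the walk $\{X_t\}$ started at $O$ and absorbed on $\partial\Ga_M$ first reaches $\be$ at the point $(m,k)$. By Theorem~\ref{t:main} the sequence $\{p_M(k)\}$ is log-concave for every $M$, so
$$
p_M(k)^2 \, \ge \, p_M(k+1)\.p_M(k-1) \qquad \text{whenever } (m,k\pm 1)\in\be .
$$
The key observation is that the probability weight of a single finite trajectory is $1/4$ raised to its length, and this weight is identical in $\ts\Ga$ and in $\ts\Ga_M$; the only effect of the cap is to forbid trajectories that rise above height $M$ before reaching $\be$. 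Consequently the lattice paths contributing to $p_M(k)$ are precisely those contributing to $p(k)$ that never exceed height $M$, so $p_M(k)$ is nondecreasing in $M$ and $p_M(k)\uparrow p(k)$ by monotone convergence over the countable set of finite first-passage paths $O\to(m,k)$. Since this holds simultaneously for the three heights $k-1,k,k+1$, passing to the limit $M\to\infty$ in the displayed inequality gives $p(k)^2 \ge p(k+1)\.p(k-1)$, as desired.

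The only step requiring genuine care is the convergence $p_M(k)\to p(k)$, and in particular that the limit really is the infinite-strip exit probability rather than some defect mass. I expect this to cause no difficulty: each $p(k)$ is by definition an absolutely convergent sum of nonnegative path weights of total mass at most $1$, and the difference $p(k)-p_M(k)$ is exactly the weight of those first-passage paths to $(m,k)$ that wander above height $M$, a tail which tends to $0$ as $M\to\infty$. Thus no recurrence or transience input about the walk on the infinite strip is needed: log-concavity is a purely combinatorial inequality among the well-defined finite-path sums $p(k)$, and these are the monotone limits of the log-concave sequences $\{p_M(k)\}$. Finally, the same argument applies verbatim with the $y$-invariant weights of Theorem~\ref{t:trans} in place of the uniform weights $1/4$, so the corollary holds in that generality as well.
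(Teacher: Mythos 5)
Your proposal is correct and follows the same route the paper takes: it exhausts the infinite strip by finite regions $\Ga_M$ satisfying the hypotheses of Theorem~\ref{t:main} and passes the log-concavity inequality to the limit, which the paper states in one line and you justify carefully via monotone convergence of the first-passage path weights. The paper omits these details (and also mentions an alternative route via modifying Lemma~\ref{l:main} directly for unbounded regions), but your argument is a complete version of its intended proof.
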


\smallskip

The proof follows immediately from Theorem~\ref{t:main} by taking a sequence
\ts $\{\Ga_n\}$ \ts of regions where the boundary goes to infinity, i.e.\ \ts
$\Ga_n \to \Ga$, and noting
that log-concavity is preserved in the limit.  Alternatively, one can easily
modify the proof of Lemma~\ref{l:main} to work for unbounded regions; in fact
the construction simplifies in that case.  We omit the details.

\smallskip

One can also ask whether the $x$-monotonicity assumption in the theorem can
be dropped.  Note that the proof of Lemma~\ref{l:main} breaks in step~$(2)$
as the shifted path \ts $\zeta_{B'C'}$ \ts no longer has to lie inside~$\Ga$,
see Figure~\ref{f:inf}.  Although we do not believe that Theorem~\ref{t:main}
extends to non-monotone boundaries, it would be interesting to find a formal
counterexample.

\begin{figure}[hbt]
\includegraphics[width=16.8cm]{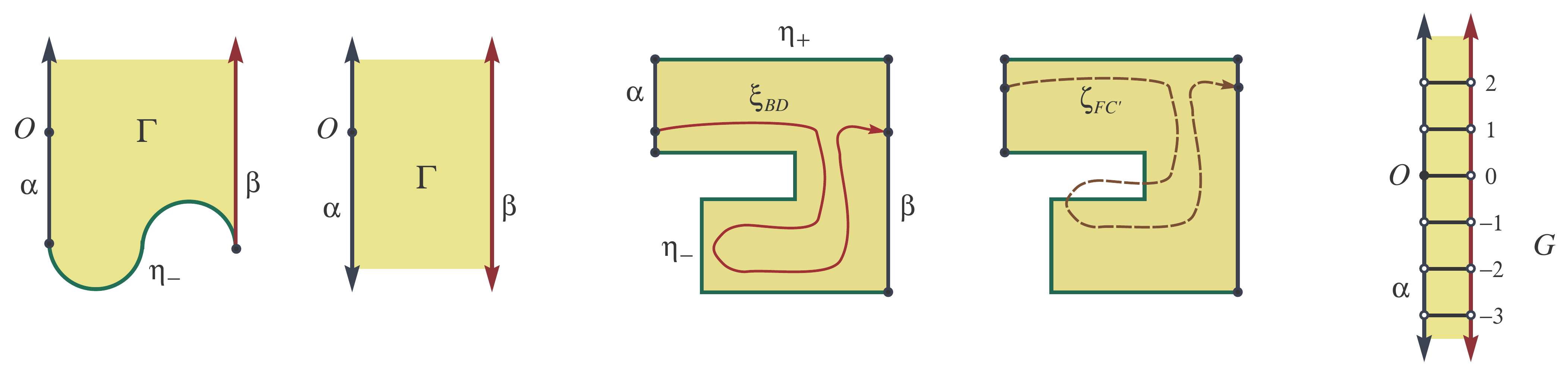}
	\caption{Infinite regions with one and two ends, the issue with non-monotone boundary
in step~$(2)$ of the proof of the lemma, and a ladder graph~$G$.}
	\label{f:inf}
\end{figure}

\begin{ex}\label{ex:strip} {\rm
In the notation above, let $m=1$ and consider an infinite strip between
two lines which forms a \defn{ladder graph}~$G$ as in Figure~\ref{f:inf}.  When restricted to~$G$,
the nearest neighbor random walk moves along~$\al$ with equal probability \ts $\frac13$ \ts of
going up or down, until it eventually moves to the right, at which point it stops.
In this case the exit probabilities can be calculated explicitly:
$$
p(\pm 2r) \, = \,\sum_{n=0}^{\infty} \. \frac{1}{3^{2n+1}} \. \binom{2n}{n-r}\,, \qquad
p(2r+1) \, = \,p(-2r-1) \, = \, \sum_{n=0}^{\infty} \. \frac{1}{3^{2(n+1)}} \. \binom{2n+1}{n-r}\,,
$$
for all \ts $r\ge 0$.  A direct calculation gives:
$$
p(\pm k)  \, = \, \frac{1}{\phi^{2k}\ts\sqrt{5}}  \quad \text{for all \, $k\ge 0$}\ts, \quad \text{where} \ \ \phi\. = \. \frac{\sqrt{5}+1}{2} \ \ \text{is the \defn{golden ratio}}.
$$
Thus, log-concavity is an equality at all $k\ne 0$.  We leave it as an exercise to the reader
to give a direct bijective proof of this fact.  Note that these equalities disappear for \ts $m \ge 2$,
cf.~$\S$\ref{ss:finrem-cauchy}.
}\end{ex}

\bigskip

\section{Final remarks} \label{sec:finrem}

\subsection{} \label{ss:finrem-posets}
Log-concavity of the number of monotone lattice paths as in Corollary~\ref{c:mono}
is equivalent to the Stanley inequality for posets of width two, as noted
in~\cite{CFG,GYY}.  For general posets, the Stanley inequality was proved
in~\cite{Sta1}.  An explicit injection in the width two case was given
in~\cite{CFG} and generalized by the authors~\cite{CPP1,CPP2}.
The construction in~\cite{CPP2} was the basis of this paper.

\subsection{} \label{ss:finrem-new}
Except for the special case of monotone lattice paths and monotone boundary
discussed above, we are not aware of the problem even being considered before.
The generality of our results is then rather surprising given that even simple
special cases appear to be new (see below).

\subsection{} \label{ss:finrem-hist}
The reflection principle is due to Mirimanoff (1923), and is often misattributed to
Andr\'e, see~\cite{Ren}. It is described in numerous textbooks, both classical~\cite{Fel,Spi}
and modern~\cite{LL,MP}.  For the Karlin--McGregor formula (1959) and
its generalizations, including the Brownian motion version of Fomin's result
(Lemma~\ref{l:Fomin}), see e.g.~\cite[Ch.~9]{LL}.  For the
Lindstr\"om--Gessel--Viennot lemma and applications to enumeration
of lattice paths, see the original paper~\cite{GV} and the extensive treatment
in~\cite[$\S$5.4]{GJ}.  It is also related to a large body of work on tilings
in the context of \emph{integrable probability}, see~\cite{Gor}.  For the
algebraic combinatorics context of Fomin's result in connection with
total positivity, see~\cite[$\S$5]{Pos}.

\subsection{} \label{ss:finrem-ineq}
Note also that the log-concavity of exit probabilities does not seem to be
a consequence of any standard non-combinatorial approaches.
For example, the \emph{real-rootedness} fails already for
Delannoy numbers \ts $\bigl\{\rD(k,8-k), \ts 0\le k \le 8\}$,
see~$\S$\ref{ss:gen-dyck}.
We refer to \cite{Bre,Sta2} for  surveys of classical methods on unimodality
and log-concavity, and to~\cite{Pak} for a short popular introduction to
combinatorial methods. See also surveys~\cite{Bra,Huh} for more recent
results and advanced algebraic and analytic tools.

\subsection{} \label{ss:finrem-ec}
In the context of Example~\ref{ex:dyck}, Dyck, Schr\"oder and Motzkin paths
play a fundamental role in enumerative
combinatorics in connection with the \emph{Catalan numbers}
\cite[\href{https://oeis.org/A000108}{A000108}]{OEIS},
\emph{Schr\"oder numbers} \cite[\href{https://oeis.org/A006318}{A006318}]{OEIS} and
\emph{Motzkin numbers} \cite[\href{https://oeis.org/A001006}{A001006}]{OEIS},
respectively.  Ballot numbers and Delannoy numbers appear in exactly the same
context.  We refer to~\cite[Ch.~5]{EC} for numerous properties of these numbers.

While binomial coefficients and ballot numbers are trivially log-concave
via explicit formulas, the log-concavity of Delannoy numbers appears to
be new.  Non-real-rootedness in this case suggests that already this special
case is rather nontrivial.  It would be interesting to see if log-concavity
of Delannoy numbers can be established directly, in the style of  basic
combinatorial proofs in~\cite{Sag}.

\subsection{} \label{ss:finrem-quarter}
There is a large literature on exact and asymptotic counting of various
walks in the quarter plane with small steps, see e.g.~\cite{Bou,BM}.
Most notably, both \emph{Kreweras walks} (1965) and \emph{Gessel walks} (2000)
fit our framework, while some others do not.  It would be
interesting to further explore this connection.

\subsection{} \label{ss:finrem-cauchy}
In the context of~$\S$\ref{ss:gen-boundary} and Example~\ref{ex:strip},
consider a simple random walk constrained to a strip \ts $0\le x \le m$,
reflected at \ts $x=0$, \ts and with no top/bottom boundaries.  This
special case is especially elegant.
The exit probabilities \ts $p(mt)$, as $m\to \infty$,
converge to the \emph{hyperbolic secant distribution}, which is log-concave
in~$t$.\footnote{See the {\tt MathOverflow} answer: \ts
\url{https://mathoverflow.net/a/395065/4040}.}
This is in sharp contrast with the case of a
simple random walk which starts at the origin, but is \emph{not} constrained to be in
the \ts $x\ge 0$ \ts halfplane.  Denote by  \ts $q(k)$ \ts the hitting probabilities of the point
$(k,m)$ on the line $x=m$.  In this case it is well known that hitting probabilities \ts $q(mt)$,
as \ts $m\to \infty$, converge to the \emph{Cauchy distribution}, see e.g.~\cite[p.~156]{Spi},
which is not log-concave (in~$t$).

\vskip.4cm

\subsection*{Acknowledgements}
We are grateful to Robin Pemantle for interesting discussions on the subject,
and to Timothy Budd for his help resolving the \ts {\tt MathOverflow} \ts
question.  Nikita Gladkov pointed out a subtle issue in step~$(3)$ of
the construction in the previous version of this paper, necessitating
the addition of Lemma~\ref{l:Fomin}.
We are also thankful to Per Alexandersson and Cyril Banderier
for helpful remarks on the paper.   We also thank the anonymous
referees for useful comments to improve presentation.
The last two authors were partially supported by the NSF.


\vskip1.2cm

\vskip.9cm

\end{document}